\keywords{ moduli of curves, fibration, gonality} 
\subjclass{Primary 14D22, 14H51; Secondary 14H10}
\theoremstyle{plain}
\newtheorem{thm}{Theorem}[section]
\newtheorem{prop}[thm]{Proposition}
\newtheorem{cor}[thm]{Corollary}
\newtheorem{lem}[thm]{Lemma}
\theoremstyle{definition}
\newtheorem{defn}[thm]{Definition}
\theoremstyle{remark}
\newtheorem*{rem}{Remark}
\newcommand{\sB}{\mathcal{B}}
\newcommand{\sE}{\mathcal{E}}
\newcommand{\sH}{\mathcal{H}}
\newcommand{\sM}{\mathcal{M}}
\newcommand{\sO}{\mathcal{O}}
\newcommand{\sU}{\mathcal{U}}
\newcommand{\mP}{\mathbb{P}}
\numberwithin{equation}{section}
\newcommand{\beba}  {\begin{equation}\begin{array}{rcl}}
\newcommand{\eaee}  {\end{array}\end{equation}}
\newcommand{\mg} {{\overline {\mathcal  M}}_{g}}
\title{ A note on Harris Morrison sweeping families of maximal gonality}
\author{ Beorchia Valentina and Zucconi Francesco}
\address{Dipartimento di Matematica e Geoscienze,
               Universit\`a degli studi di Trieste,\\
via Valerio 12/b, 34127 Trieste, Italy\\
\texttt{beorchia@units.it}}
\address{Dipartimento di Matematica e Informatica\\
via delle Scienze 206,
Universit\`a degli studi di Udine\\
Udine, 33100 Italy\\
\texttt{Francesco.Zucconi@uniud.it}}
\begin{document}

\maketitle

\markboth{Beorchia and Zucconi}{Harris-Morrison sweeping families}

\begin{abstract} In \cite[theorem 2.5]{HMo} Harris and Morrison construct
   semistable families $f\colon F\to Y$ of $k$-gonal curves of genus $g$ 
   such that for 
   every $k$ the corresponding modular curves give a sweeping family 
   in the $k$-gonal 
   locus $\overline{\sM^{k}_{g}}$. Their construction depends on the choice of
   a smooth curve $X$. We 
   show that if the genus $g(X)$ is sufficiently high with respect to 
   $g$ then the ratio $\frac{K^{2}_{F}}{\chi(\sO_{F})}$ is $8$ 
   asymptotically with respect to $g(X)$.
   Moreover, if the conjectured estimates given in 
   \cite[p. 351-352]{HMo} hold, we show that if $g$ is big enough, then
    $F$ is a surface of positive index.
 \end{abstract}
\tableofcontents

\section{Introduction.}
Let $\overline\sM_{g}$ be the 
moduli space of stable curves of genus $g$. 
A family $\sB$ of curves $Z\subset {\overline {\mathcal M}_g}$
is said to be a sweeping family if 
$\cup\sB$ is a Zariski dense subset of $\overline\sM_{g}$. The 
invariants 
of sweeping families can furnish important information on the geometry of 
$\overline\sM_{g}$; see: \cite{CFM}.

In this paper we focus on the geometry of the 
families of curves constructed in \cite[Theorem 2.5]{HMo}. We revise this 
construction in Section 2; here we point out only that 
it depends on choosing a smooth curve $X$ of genus $g(X)$. 
For the aim of \cite{HMo} $g(X)$ can be taken to be zero, but Harris 
Morrison construction applies then to any curve $X$.

We compute the invariants of the smooth surface $F$ supporting the Harris Morrison family. We denote by 
$e(F)$, $\chi(\sO_{F})$, $K^{2}_{F}$ respectively the topological characteristic of $F$, its 
cohomological characteristic and the self-intersection 
of the canonical divisor. We also denote by $q(F)$ the irregularity 
of $F$. 
 We 
have:

\medskip

\noindent
{\bf{Theorem}}{\it{ Let $g,k\in\mathbb N$ such 
       that $k$ can occur as the gonality of a smooth curve of genus 
       $g$. There exist Harris Morrison sweeping 
       $k$-gonal semistable 
       genus-$g$ fibrations $f \colon F\to Y$ 
       such that the ratio 
       $\frac{K^{2}_{F}}{\chi(\sO_{F})}$
       is $8$ asymptotically with respect to $g(X)>0$.}}
\medskip

 \noindent
 See Theorem \ref{secondaadifferenza} for a refined statement. We 
 stress that our Theorem is not obtained by a base change over a 
 family as in \cite[Theorem 2.5]{HMo} starting from $\mP^{1}$. We 
 remark that 
 the above result applies to any gonality $k$.

 If we consider only those 
 families with maximal gonality, then we show that (see Proposition
 \ref{secondo}): 

\medskip

\noindent
{\bf Proposition} {\it{Let $f\colon F\to Y$ be an 
    Harris Morrison genus-$g$ fibration 
   starting from any plane curve $X$ of 
       genus $g(X)>>0$. Assume 
    that the gonality $k$ is maximal and that the conjectured estimates 
    in \cite[p. 351-352]{HMo} are
    true. If $g$ is big enough, then $F$ is a surface of 
    positive index i.e. $K^{2}_{F}> 8\chi(\sO_{F})$. Moreover 
    if $f\colon F\to Y$
    is general in its class then the irregularity of $F$ is
    $g(Y)$. In particular, $f\colon F\to Y$ is the Albanese morphism 
    of $F$.}}
\medskip

 We recall that any surface of general type satisfies the Miyaoka-Yau 
 inequality $K^{2}_{F}\leq 9\chi(\sO_{F})$, and equality holds if and 
 only if $F$ is a ball quotient. Surfaces of positive index satisfy the inequalities 
 $8\chi(\sO_{F})< K^{2}_{F}{\bf < }\
 9\chi(\sO_{F})$. These surfaces are still quite mysterious objects 
\cite{Re}, \cite{MT}, \cite{My}. 

The statement of the Proposition above is close to the recent
results of Urz{\'u}a.
He has found in  \cite{Ur1}
simply connected projective surfaces of general type with 
$\frac{K_{F}^{2}}{e(F)}$ arbitrarily close to $\frac{71}{26}$; this  
improves previous estimates by Persson-Peters-Xiao \cite{PPX}. By his 
method, based on a generalisation of the method of line arrangements on 
$\mP^{2}$, he finds interesting surfaces of positive 
index; see: \cite{Ur2}. Thanks to the Kapranov construction of the moduli space
${\overline{M}}_{0,d+1}$ of rational curves with $d+1$ marked points, 
see \cite{Ka1}, \cite{Ka2}, line arrangements correspond to
curves inside ${\overline{M}}_{0,d+1}$. In this sense Urz{\'u}a 
construction is related to the Harris Morrison one. Hence the fact that 
our results on Harris 
Morrison construction of $k$-gonal sweeping families match those 
obtained in \cite{Ur2} is an evidence for the conjectural results 
stated in \cite{HM}.

\ackn The authors would like to thank D. Chen 
 for his comments on the first version of the paper.
 
 This research is supported by MIUR funds, 
PRIN project {\it Geometria delle variet\`a algebriche} (2010), 
coordinator A. Verra. The first author is also supported by funds of the
Universit\`a degli Studi di Trieste - Finanziamento di Ateneo per
progetti di ricerca scientifica - FRA 2011.

\bigskip\bigskip

\section{ The basic construction.}

\subsection{Harris-Morrison families.}

We review and we explain some features of the basic construction of \cite{HMo}.
We could construct a slight refinement of 
the basic construction of \cite[Section 2]{HMo} using not a product 
surface $X\times\mP^{1}$ but a ruled surface $\mP(\sE)\to X$ over 
$X$, but we 
have decided to follow \cite[Section 2]{HMo} for simplicity.

We denote by $[N]$ and $N$ a finite set and its 
order respectively. If 
$[M]$ is a subset of $[N]$ and is invariant under the action of 
$\mathfrak{S}_k$ we will denote the quotient $[M]/\mathfrak{S}_k$ by 
$[\widetilde M]$.

Let ${\overline{M_{0, b}}}$ be the compactification of the moduli space of $b$-pointed stable 
curves of genus $0$ and let ${\overline\beta}\colon{\overline{H_{k, 
b}}}\to {\overline{M_{0, b}}}$ be the natural morphism on the
Hurwitz scheme of admissible 
$k$-covers of stable $b$-pointed curves of genus $0$ constructed in 
\cite{HM}. Then the morphism $\beta\colon H_{k,b}\to\sM_{0,b}$ is a 
$\widetilde N(k,b)$ sheeted unramified covering, where $\widetilde N(k,b)$ counts the 
$k$-sheeted {\it{connected}} covers of $\mP^{1}$ simply branched at 
$b$ fixed points. From now on we set $N:=N(k,b)$, where  $N(k,b)$ is 
defined in \cite[page 334]{HMo}. Concerning the 
subset $[N]$ we recall that $\mathfrak{S}_k$ acts on $[N]$ and 
by \cite[Lemma 1.24]{HMo} this action is free if 
$k\geq 3$ and trivial for $k=2$, so $N=k!\ \widetilde N$ if $k\geq 3$ and 
$N=\widetilde N$ if $k=2$.

Let $X$ be any smooth complete curve
and let $\pi_{X}\colon X\times\mP^{1}\to X$ be the projection.
By \cite{Ha} ${\rm{NS}}(X\times\mP^{1})=[s]\mathbb Z\oplus [f]\mathbb 
Z$ where $[s]$ is the numerical class of a $\pi_{X}$-section $s$
and $[f]$ is the numerical class of a 
$\pi_{X}$-fiber $f$. In the sequel we will not distinguish between 
$s$ and its class $[s]$ if no danger of confusion can arise.

Let $s$ be a $\pi_{X}$-section with $s^{2}=0$. Let $b\in \mathbb N$ and
let $C_{1},\ldots, C_{b}$ be effective divisors on $X$ of degrees $c_i >0$,
such that $\sO_{X\times\mP^{1}}(s+\pi_X ^\star C_i)$ is very ample, $i=1,\ldots 
,b$. Let 
$\sigma_{i}\in |s+\pi_X ^\star C_i|$ be a smooth curve for 
every $i=1,\ldots, b$,  such that the sections $\sigma_{i}$ 
meet transversely everywhere and such that 
each $\pi_{X}$-fiber contains at most one of these intersections. 

We set 
$$
\sigma:=\sum \sigma_{i}
$$ and we point out that $[\sigma]= b[s]+[\pi_X ^\star C]=b[s]+c[f]$ 
where $C=\sum C_{i}$ and $c=\sum c_{i}$.
The reader is warned that if $X=\mP^{1}$ the above costruction easily 
works. If $X$ is a curve of positive genus then it is sufficient 
to assume that for every $i=1,\ldots, b$ the number $c_{i}$ is 
at least equal to the degree of a very ample divisor on $X$.

Let $[I_{X}]$ 
be the set of nodes of $\sigma$ or equivalently the set of 
intersections of the $\sigma_{i}$'s. Then 
$$
I_{X}=(b-1)c;
$$
\noindent
see: \cite[Formula 2.1 on page 338]{HMo}.

Let $B_{X}$ be the blow-up of 
$X\times\mP^{1}$ at $[I_{X}]$ and denote by $B_{x}$ its fiber over $x\in X$ 
and $\widetilde \sigma_{i}$ the strict transform of $\sigma_{i}$ on 
$B_{X}$. Let $\alpha\colon X\to{\overline{\sM_{0,b}}}$ be the map 
which sends a point $x\in X$ to the class of the fiber $B_{x}$ marked 
by its $b$ points of intersections with the $\widetilde\sigma_{i}$ 
where $i=1,\ldots, b$. 
Let $Y:=X\times_{{\overline{\sM_{0,b}}}}{\overline{H_{k, 
b}}}$. By the same argument of \cite[pages 338-339]{HMo} the induced 
map $\mu\colon Y\to X$ is a covering of degree $\widetilde N$. 
Let $\pi_{Y}\colon Y\times\mP^{1}\to Y$ be natural projection.
 If we set $\nu:=\mu\times{\rm{id}}\colon Y\times\mP^{1}\to 
X\times\mP^{1}$ we see that 
$[I_{Y}]:=\nu^{\star}([I_{X}])$ is the scheme of singularities of the 
divisor  
$$
\tau:=\nu^{\star}\sigma
$$ on $Y\times\mP^{1}$. Let $[J_{Y}]\subset
Y$ be the $\pi_{Y}$-projection of $[I_{Y}]$ and let $[J_{X}]\subset 
X$ be the $\pi_{X}$-projection of $[I_{X}]$. By construction we have 
a morphism from the complement of $[J_{Y}]$ to ${\overline{\sM_{g}}}$, 
and we extend it to a morphism $\rho\colon Y\to {\overline{\sM_{g}}}$ 
following the argument of 
\cite[Theorem 2.15]{HMo}.

Here we recall first that 
$\mu\colon Y\setminus [J_{Y}]\to X\setminus [J_{X}]$ is unramified and 
that $[J_{Y}]$ is partitioned 
   into three classes: $$[J_{Y}]=[J_{Y, (1)}]\cup [J_{Y, (2,2)}]\cup 
   [J_{Y, (3)}].$$\noindent
  The topological description given in
   \cite[Page 340]{HMo} yields that $\mu$ is not ramified on 
   $[J_{Y, (1)}]\cup [J_{Y, (2,2)}]$ and it is triply branched at 
   points of $[J_{Y, (3)}]$. In particular 
   $$
   I_{Y}=\left({\widetilde{N_{1}}}+{\widetilde{N_{2,2}}}+
   \frac{{\widetilde{N_{3}}}}{3}\right)
   (b-1)c;$$
 where ${\widetilde N}={\widetilde{N_{1}}} 
 +{\widetilde{N_{ 2,2}}}+{\widetilde{N_{3}}}$   
 and a combinatorical definition of 
   ${\widetilde{N_{1}}}, {\widetilde{N_{2,2}}}, {\widetilde{N_{3}}}$ 
   is given in \cite[Proposition 2.9]{HMo}.

   Following 
\cite{HMo} we blow-up $\epsilon\colon A\to Y\times\mP^{1}$ at the points of 
the set $[I_{Y,(1)}]$ 
and  we construct a finite cover $\pi\colon G\to A$ and a semistable 
fibration $f\colon F\to Y$
   see \cite[Diagram (2.2), page 338]{HMo} such that over the complement 
   $Y'$ of $[J_{Y}]$ in $Y$ the surface $G$ is obtainable
   by the pull-back of the 
   smooth universal admissible cover 
   $\theta\colon\sU_{\sH_{k,b}}\subset\mP^{1}\times 
   \sH_{k,b}\to\sH_{k,b}$ whose existence is shown in \cite{HM}. We 
   recall that over $Y'$, $G$ is a stable fibration $G_{Y'}\to 
   Y'\subset Y$, it coincides 
   with $f\colon F\setminus f^{-1}([J_{Y}])=F_{Y'}\to Y'$. Moreover 
   $A$ coincides with $Y\times\mP^{1}$ over $Y'$ and there 
   exists a {\it{finite}} morphism $G_{Y'}=F_{Y'}\to 
   A_{Y'}=\mP(\sO_{Y'}\oplus\sO_{Y'})$.\

   By the very explicit 
   local analytic description of the {\it{finite}} cover $\pi\colon G\to A$ and of the 
   blow-up $\epsilon\colon A\to Y\times\mP^{1}$  done in 
   \cite[from page 343 to the top of page 347]{HMo} we obtain an 
   explicit description of the smooth 
   surface $F$ and of the semistable fibration $f\colon 
   F\to Y$ inducing the desired moduli map 
   $\rho\colon Y\to{\overline{\sM_{g}}}$. We set $Z:=\rho(Y)$. We sum 
   up the above construction in the following diagram:
   
   \bigskip
   
   \begin{equation}\label{diag}
\xymatrix{
 S \ar[dr]^h \ar@{<-}[d]^u &  &&\\
G  \ar[r]^\zeta \ar[d]_\pi^{k:1}  & F\ar[dr]^f \ar@{-->}[d]&& \qquad Z \subset \mg \ar@{<-}[dl]^\rho\\
 A \ar[r]^{\epsilon } & Y\times\mP^{1}\ar[r]^{\pi_Y} \ar[d] _\nu^{\widetilde N:1} & Y \ar[d]_\mu ^{\widetilde N:1}\ar[r]& \overline{H}_{k,b}\ar[d]^{\overline\beta}\\
  &  X\times\mP^{1} \ar[r]^{\pi_X} & X \ar[r]^\alpha &\overline {\sM_{0,b}}\\
}
\end{equation}

\bigskip

   What makes this families very interesting is that thanks to the 
   work done in \cite{HMo} the dominant morphism $\zeta\colon G\to F$ 
   is very explicit and hence
the nature of the fibers of $f\colon F\to Y$ 
over $[J_{Y}]$ is evident.

If $y\in Y$ we denote by 
	$F_{y}$ and $G_{y}$ respectively the $f$-fiber over $y$ and the 
	$(f\circ \zeta)$-fiber over $y$.

	The analysis of fibers of type $(1)$, that is $y\in [J_{Y, 
	(1)}]$, splits into four cases: 
	$(1_{0})$, $(1_{j, 0})$
	$(1_{j, g})$, $(1_{j, i})$ where $j>0$ and $1\leq i\leq 
	g/2$; see \cite[Diagram (2.12)]{HMo} and with obvious 
	notation we write $y\in [J_{Y,\Pi}]$ if  $y\in [J_{Y, 
	(1)}]$ and $y$ is of type $\Pi$.

	If $y\in [J_{Y, (1_{0})}]$ then $F_{y}=F'_{y}\cup E$ where 
	$F'_{y}$ is a genus $g-1$ curve and
	$E$ is a rational $(-2)$ curve; 
	see the top of diagram \cite[Diagram (2.12)]{HMo}. The 
	corresponding fiber $G_{y}$ on $G$ is 
	$$
	G_{y}=G''_{y}\cup E'\cup\bigcup_{j=1}^{k-2}E_{j,y}
	$$ 
	where $G''_{y}$ is a 
	genus $g-1$ curve,
	$E_{j,y}^{2}=-1$ and $E'$ is a -$2$ curve. Over a 
	neighbouhood of $y$ the morphism $\zeta\colon G\to F$ 
	consists on the contraction of the $k-2$ rational curves 
	$E_{j,y}$.

	If $y\in [J_{Y, (1_{j, 0})}]$ or $y\in [J_{Y, 
	(1_{j, g})}]$ then $F_{y}$ is a {\it{smooth}} semistable 
	fiber and over a 
	neighbouhood of $y$ the morphism $\zeta\colon G\to F$ 
	consists on a {\it{ordered contraction}} of $k$ rational 
	curves. More precisely if $y\in [J_{Y, (1_{j, 0})}]\cup [J_{Y, 
	(1_{j, g})}]$ then 
	$$
	G_{y}=G'_{y}\cup E'_{y}\cup 
	G''_{y}\cup \bigcup_{i=1}^{k-2}E_{i,y}
	$$ 
	where 
	$G'_{y}$ is a rational curve, $E'_{y}$ is a $(-2)$-rational 
	curve, $E'_{y}\cdot G'_{y}=1$, $G''_{y}$ is a smooth curve of genus $g$,
	$E'_{y}\cdot G''_{y}=1$,
	$G''_{y}$ is a smooth curve of genus $g$ and $E_{i,y}^{2}=-1$.  
	The map $\zeta$ first contracts 
	$E_{i,y}$ where $i=1,\ldots ,k-2$. After this contraction process
	the image of $G'_{y}$ is a $-1$ curve which intersects the 
	image of $E'_{y}$ in a unique point. Hence to get the semistable reduction $F$
	we need {\it{first}} to contract the image of $G'_{y}$ and then the 
	image of $E'_{y}$.

	If 
	$y\in [J_{Y, (1_{j, i})}]$ where $j>0$ and $1\leq i\leq [g/2]$ 
	then $F_{y}=F_{y,i}\cup E_{y}\cup F_{y,g-i}$ where 
	$F_{y,i}\cap F_{y,g-i}=\emptyset$, $E_{y}^{2}=-2$, 
	$F_{y,i}\cdot E_{y}= F_{y,g-i}\cdot E_{y}=1$ and 
	$$
	G_{y}= 
	G'_{y}\cup E'_{y}\cup G''_{y}\cup\bigcup_{i=1}^{k-2}E_{i,y},
	$$
	where $E_{i,y}$ are $-1$-rational curves, $E_{y}$ is a $(-2)$ curve. 
	Moreover $F_{y,i}$ is smooth of genus $i$ and admits a $j$ 
	covering over the fiber $\mP^{1}_{y}\subset Y\times\mP^{1}$ and 
	$F_{y,g-i}$ is smooth of genus $g-i$ and admits a $k-j$ 
	covering over $\mP^{1}_{y}$. Note that if $e_{y}\subset A$ is 
	the exceptional curve over $y$ arised in the blow-up 
	$A\to Y\times\mP^{1}$ then the restriction of $\pi\colon G\to A$ 
	to $E'_{y}\cup\left(\bigcup_{i=1}^{k-2}E_{i,y}\right)$ gives the $k$-cover of 
	$e_{y}$ contained in $G$. Over a 
	neighbouhood of $y$ the morphism $\zeta\colon G\to F$ 
	consists on the contraction of the $k-2$ rational curves 
	$E_{i,y}$.

	\begin{lem}\label{tipodueduetre}
    If $y\in [J_{Y, (2,2)}]\cup [J_{Y, (3)}]$ then the fiber $F_{y}$ 
    of $f\colon F\to Y$ over $y$ is a smooth semistable curve. 
    The rational map $F\dashrightarrow Y\times\mP^{1}$ is finite in a 
    neighbourhood of $F_{y}$. Moreover the surfaces $G$ and 
   $F$ coincide over an analytic neighboorhood of $y\in Y$.
    \end{lem}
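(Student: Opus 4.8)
The plan is to reduce the statement to the explicit local analytic models of the finite cover $\pi\colon G\to A$ and of the blow-up $\epsilon\colon A\to Y\times\mP^{1}$ worked out in \cite[pp. 343--347]{HMo}, in the same spirit as the type-$(1)$ fibre analysis recalled above. The decisive structural remark is that $\epsilon$ is the blow-up of $Y\times\mP^{1}$ at the points of $[I_{Y,(1)}]$ only, and every such point lies over $[J_{Y,(1)}]$; since by construction each $\pi_{Y}$-fibre contains at most one point of $[I_{Y}]$, for $y\in [J_{Y,(2,2)}]\cup [J_{Y,(3)}]$ the morphism $\epsilon$ restricts to an isomorphism over a suitable analytic neighbourhood $U\times\mP^{1}$ of $\pi_{Y}^{-1}(y)$. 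Hence over $U$ the surface $A$ is the trivial $\mP^{1}$-bundle $\mP(\sO_{U}\oplus\sO_{U})$ and $\pi$ is just the $k$-sheeted cover branched along $\tau=\nu^{\star}\sigma$, so it suffices to prove that over $U$ the total space $G$ is smooth, the morphism $\pi$ is finite, and the fibre $G_{y}$ is a smooth connected curve of genus $g$.

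To do this I would first unwind the topological picture of \cite[p. 340]{HMo}: the point of $[J_{X}]$ below $y$ is the image of a node of $\sigma$ at which two sections, say $\sigma_{i}$ and $\sigma_{j}$ with branch transpositions $\tau_{i},\tau_{j}$, cross, and the type of $y$ is governed by the cycle type of $\tau_{i}\tau_{j}$ together with the ramification of $\mu$ at $y$ --- cycle type $(2,2)$ with $\mu$ unramified in the first case, a $3$-cycle with $\mu$ triply ramified in the second. In local analytic coordinates near the distinguished point of $\pi_{Y}^{-1}(y)$ the divisor $\tau$ is, up to units, $\{v(v-\mu^{\star}x)=0\}$ together with further smooth branches not through that point, where $v$ is a fibre coordinate on $\mP^{1}$ and $x$ a local coordinate on $X$ at the image point; in the type-$(2,2)$ case $\mu^{\star}x$ is a local coordinate on $U$ so the two branches of $\tau$ are transverse, whereas in the type-$(3)$ case $\mu^{\star}x$ vanishes to order $3$. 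Feeding these equations into $\pi$, over the $k-4$ (resp.\ $k-3$) sheets not moved by $\tau_{i},\tau_{j}$ the cover is unramified near the fibre, while the remaining sheets assemble, by \cite[pp. 345--347]{HMo}, into the explicit double cover (resp.\ $\mathfrak{S}_{3}$-cover) described there, whose total space is smooth and finite over $A$ and whose fibre over $y$ is smooth. A Riemann--Hurwitz count on $G_{y}\to\mP^{1}_{y}$ --- branched at the $b-1$ points obtained by colliding $\sigma_{i}\cap\mP^{1}_{y}$ and $\sigma_{j}\cap\mP^{1}_{y}$, with the collided point carrying monodromy $\tau_{i}\tau_{j}$ --- then shows that $G_{y}$ has the same genus $g$ as the general fibre, and connectedness of $G_{y}$ follows since $G$ is smooth over $U$ with connected fibres over $U\setminus\{y\}$.

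Granting the above, $G$ is already a smooth family with smooth genus-$g$ fibre over $U$, hence semistable over $U$; therefore $\zeta$ contracts nothing over $U$ and restricts to an isomorphism there. This yields at once that $F$ and $G$ coincide over an analytic neighbourhood of $y$, that $F_{y}=G_{y}$ is a smooth semistable curve, and --- composing the local isomorphism $\zeta^{-1}$ with the finite morphism $\epsilon\circ\pi\colon G\to Y\times\mP^{1}$ --- that the rational map $F\dashrightarrow Y\times\mP^{1}$ is finite in a neighbourhood of $F_{y}$, which is exactly the assertion.

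The step I expect to be the real obstacle is the smoothness of $G$, and the irreducibility of $G_{y}$, in the type-$(3)$ case: there the ramification index $3$ of $\mu$ makes the two branches of $\tau$ tangent to order $3$ at the distinguished point, so it is not clear a priori that a degree-$k$ cover branched along such a $\tau$ has smooth total space, nor that $G_{y}$ stays irreducible rather than acquiring a node. What must be extracted from \cite[pp. 345--347]{HMo} is precisely a local normal form showing that the $\mathfrak{S}_{3}$-subcover carried by the three sheets cyclically permuted by $\tau_{i}\tau_{j}$ is, in suitable coordinates, the smooth cover exhibited there, so that the tangency of $\tau$ is exactly absorbed by the $3$-cyclic monodromy; once this normal form is in hand the rest of the argument is formal.
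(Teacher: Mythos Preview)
Your argument is correct and follows exactly the route the paper takes: the paper's proof is the bare citation ``See \cite[Pages 343--346]{HMo}'', and what you have written is a faithful and detailed unpacking of the local analytic models on those pages, together with the observation that $\epsilon$ blows up only $[I_{Y,(1)}]$ so is an isomorphism near fibres of type $(2,2)$ and $(3)$. Your identification of the type-$(3)$ smoothness as the delicate point is accurate, and it is precisely the content of the $\mathfrak{S}_{3}$-cover normal form in \cite[pp.\ 345--346]{HMo} that you invoke.
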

    \begin{proof} See \cite[Pages 343-346]{HMo}.
	\end{proof}

\begin{prop}
    \label{daGaS} 
    Over each of the points of $[I_{X}]=\pi_{X}([J_{X}])\subset X$ there are ${\widetilde{N}}_{{\rm{sing}}}$ 
	    points $y\in Y$ such that the fiber $F_{y}\subset F$ is 
	    singular. Any singular fiber of $f\colon 
	    F\to Y$ contains a unique $(-2)$ rational curve. 

\end{prop}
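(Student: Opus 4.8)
The plan is to reduce everything to the boundary set $[J_Y]\subset Y$ and then read the conclusions off the explicit local descriptions collected above. First I would note that $f\colon F\to Y$ is smooth over $Y'=Y\setminus[J_Y]$: there $F$ coincides with the pull-back of the \emph{smooth} universal admissible cover $\theta\colon\sU_{\sH_{k,b}}\to\sH_{k,b}$, whose fibres are smooth curves of genus $g$, so no singular fibre of $f$ can lie over $Y'$. Together with Lemma~\ref{tipodueduetre}, which gives that the fibres over $[J_{Y,(2,2)}]\cup[J_{Y,(3)}]$ are smooth semistable curves, this confines the discussion to the fibres over $[J_{Y,(1)}]$.

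Next I would go through the four types of points of $[J_{Y,(1)}]$ via the descriptions stated just before Lemma~\ref{tipodueduetre}. If $y\in[J_{Y,(1_{j,0})}]\cup[J_{Y,(1_{j,g})}]$, $j>0$, the fibre $F_y$ is a smooth semistable curve, hence not singular. If $y\in[J_{Y,(1_0)}]$ then $F_y=F'_y\cup E$ with $F'_y$ of genus $g-1$ and $E$ a $(-2)$-curve; if $y\in[J_{Y,(1_{j,i})}]$ with $j>0$ and $1\le i\le[g/2]$ then $F_y=F_{y,i}\cup E_y\cup F_{y,g-i}$ with $E_y^2=-2$ and $F_{y,i}$, $F_{y,g-i}$ of genera $i$ and $g-i$. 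In both of the last two cases $F_y$ is reducible and nodal, and since $g\ge2$ every component other than $E$ (resp.\ $E_y$) has positive genus, so the displayed $(-2)$-curve is the only rational $(-2)$-curve in the fibre. This shows at once that the singular fibres of $f$ are precisely those over $[J_{Y,(1_0)}]\cup\bigcup_{j>0,\,1\le i\le[g/2]}[J_{Y,(1_{j,i})}]$ and that each of them contains a unique $(-2)$-curve.

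It remains to do the counting. The set $[I_X]=\pi_X([J_X])$ consists of the $(b-1)c$ nodes of $\sigma$, each a transverse crossing of two of the sections $\sigma_i$ inside a single $\pi_X$-fibre; the analytic model of the family of admissible covers near such a node depends only on the combinatorics of two colliding simple branch points on $\mP^1$, which is identical at every node. Hence the fibre of $\mu\colon Y\to X$ over each $x\in[J_X]$ carries the same multiset of degeneration types, so the number of $y$ over $x$ of type $(1_0)$, resp.\ of type $(1_{j,i})$, is a constant $\widetilde N_{1_0}$, resp.\ $\widetilde N_{1_{j,i}}$, extracted from the combinatorial count of \cite[Proposition~2.9]{HMo} refined along \cite[Diagram~(2.12)]{HMo}. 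Setting $\widetilde N_{\mathrm{sing}}:=\widetilde N_{1_0}+\sum_{j>0}\sum_{i=1}^{[g/2]}\widetilde N_{1_{j,i}}$ then gives that over each point of $[I_X]$ there lie exactly $\widetilde N_{\mathrm{sing}}$ points $y\in Y$ with $F_y$ singular.

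The main obstacle is the last point: making watertight the claim that the number of boundary points of each given type lying over a node of $\sigma$ is independent of the chosen node. I would argue this from the symmetry of the $b$ branch points in the Hurwitz-scheme construction together with the local monodromy analysis of \cite[p.\ 340]{HMo}; the confinement to $[J_{Y,(1)}]$ and the bookkeeping of $(-2)$-curves are, by contrast, immediate from Lemma~\ref{tipodueduetre} and the explicit fibre descriptions.
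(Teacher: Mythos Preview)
Your proposal is correct and follows essentially the same approach as the paper: the paper's proof simply says this is a restatement of \cite[Theorem~3.1]{HMo} together with the explicit local descriptions of the fibres given just above, and your argument is precisely the unpacking of those two ingredients. The only difference is one of detail---you spell out the case-by-case analysis of fibres over $[J_{Y,(1)}]$ and the counting argument, whereas the paper is content to defer both to \cite{HMo}.
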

	\begin{proof} This is a restatement of \cite[Theorem 
	    3.1]{HMo} and it easily follows by the above description 
	    of the singularities of the fibration $f\colon F\to Y$. 
	\end{proof}

\begin{defn} A semistable fibration $f\colon F\to Y$ as the one of 
    Proposition \ref{daGaS} is called a Harris-Morrison family.
\end{defn}

Note that by Harris Morrison construction it follows
\begin{prop}\label{harris} Let $X$ be any smooth complete curve of genus $g(X)$. 
    Let $g\geq 3$ be any natural number.
    If $g$ is odd let $k=\frac{g+3}{2}$ and set 
    $k=\frac{g+2}{2}$ if $g$ is even. Then the family of curves 
    $Z\subset {\overline {\mathcal M}_g}$ constructed by Harris and 
    Morrison varying the curve $X$ forms a sweeping family.
\end{prop}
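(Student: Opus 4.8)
The plan is to obtain this as an essentially immediate consequence of \cite[Theorem 2.5]{HMo}, once one notes that for the prescribed $k$ the $k$-gonal locus is the whole moduli space. First I would record that $k=\lceil (g+2)/2\rceil$ is exactly the generic (equivalently, maximal) gonality in genus $g$: the Brill--Noether number is $\rho(g,1,k)=2k-g-2$, which equals $0$ when $g$ is even and $1$ when $g$ is odd, so by the classical existence theorem for special linear series every smooth curve of genus $g$ carries a $g^{1}_{k}$; on the other hand $\rho(g,1,k-1)=2k-g-4<0$, so the general curve has no $g^{1}_{k-1}$. Therefore the locus of curves of gonality $\le k$ coincides with $\sM_{g}$, and hence $\overline{\sM^{k}_{g}}=\mg$.

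Next I would observe that the construction reviewed in Section 2, specialised to $X=\mP^{1}$, reproduces the construction of \cite[Section 2]{HMo}: for $X=\mP^{1}$ the hypotheses on the divisors $C_{i}$ (that each $c_{i}$ be at least the degree of a very ample divisor on $X$) are vacuous, and the smooth curves $\sigma_{i}\in|s+\pi_{X}^{\star}C_{i}|$, the blow-ups $B_{X}$ and $A$, and the covers $\pi\colon G\to A$ and $\zeta\colon G\to F$ all reduce to those of \cite{HMo}. By \cite[Theorem 2.5]{HMo}, applied with this value of $k$, the modular curves $Z=\rho(Y)\subset\mg$ arising from $X=\mP^{1}$, as the configuration of the $\sigma_{i}$ is varied, already form a sweeping family of $\overline{\sM^{k}_{g}}$. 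Since $\mP^{1}$ is among the smooth complete curves permitted in the statement, the family of curves $Z$ obtained by letting $X$ range over all smooth curves contains this subfamily; a family containing a sweeping subfamily is sweeping, and combined with $\overline{\sM^{k}_{g}}=\mg$ from the first step this shows $\bigcup Z$ is Zariski dense in $\mg$, as required.

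I do not expect a genuine obstacle. The two points that deserve care are the Brill--Noether bookkeeping for both parities of $g$, which is what guarantees that $k$ is the generic gonality and therefore that the $k$-gonal locus fills $\mg$, and a careful comparison of the Section 2 construction at $X=\mP^{1}$ with \cite[Theorem 2.5]{HMo}, so that the sweeping conclusion of the latter may be quoted without change.
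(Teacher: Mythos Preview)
Your proposal is correct and follows essentially the same approach as the paper: both arguments rest on the observation that the chosen $k$ is the maximal gonality, so that by Brill--Noether theory the $k$-gonal locus is all of $\mg$, and then invoke \cite[Theorem 2.5]{HMo} (see also \cite[p.~350]{HMo}) to conclude that the Harris--Morrison curves sweep this locus. Your version supplies more detail---the explicit computation of $\rho(g,1,k)$ and the reduction to the $X=\mP^{1}$ subfamily---but the paper's one-line proof is the same argument in compressed form.
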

\begin{proof} The chosen number $k$ is the maximal gonality for a curve of 
    genus $g$. Hence the claim follows by a standard result of 
    Brill-Noether theory. See also \cite[page 350]{HMo}. 
\end{proof}

\begin{rem} Notice that, even without the assumption of maximal 
    gonality, the 
    families of curves $Z\subset {\overline {\mathcal M}_g}$ 
    constructed by Harris and 
    Morrison varying the curve $X$ depend freely on $g(X)$. In 
    particular we will study those families such that the  
    parameter $g(X)>>0$.
\end{rem}
	
\subsection{Numerical invariants.}

We recall 
that $\epsilon\colon A\to Y\times\mP^{1}$ is the blow-up of $[I_{Y, (1)}]$ 
which is a reduced scheme of length 
$(b-1)c{\widetilde{N_{1}}}$. Let $E_{A}$ be the exceptional 
divisor of $\epsilon\colon A\to Y\times\mP^{1}$. Hence

\begin{equation}\label{autointe}
    E_{A}^{2}=-I_{Y, (1)}=-(b-1)c{\widetilde{N_{1}}}.
  \end{equation}
  Let 
$\widetilde\tau$ be the strict transform of $\tau$, that is 
$\widetilde\tau=\epsilon_{A}^{\star}(\tau)-2E_{A}$. Since 
$\tau=\nu^{\star}(\sigma)=\nu^{\star}(bs+cf)$ then

\begin{equation}\label{tautildequadro}
    (\widetilde\tau)^{2}=2bc{\widetilde{N}}-4(b-1)c{\widetilde{N_{1}}}.
    \end{equation}

Let $R$ be the ramification divisor of $\pi\colon G\to A$. 
The following 
relation holds: 

\begin{equation}\label{pistarR}
    \pi_{\star}R=\widetilde\tau.
    \end{equation}
    
    Since we have simple 
branch on the fibers the local analysis shows that 
\begin{equation}\label{pistabassopistaralto}
    \pi^{\star}\pi_{\star}R=2R+\widetilde R
\end{equation}

and that:

\begin{equation}\label{erreerretilde}
R\cdot\widetilde R=(b-1)c\left(2{\widetilde{N}}_{2,2}+\frac{4}{3}{\widetilde{N}}_{3}\right).
\end{equation}

\begin{prop}\label{erreerre} The ramification divisor $R$ of the finite 
    cover $\pi\colon G\to A$ satisfies:
    
    $$R^{2}=\left({\widetilde N}+(b-1)\left(\frac{1}{3}{\widetilde N_{3}}-
    {\widetilde N_{1}}\right)\right)c
    $$  
\end{prop}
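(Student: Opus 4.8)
The plan is to compute $R^2$ by pushing the intersection product forward along the finite cover $\pi\colon G\to A$ and using the projection formula together with the relations collected just above the statement. Since $\pi$ is a degree-$k$ finite morphism, for any two divisors $D, D'$ on $G$ one has $\pi_\star(D)\cdot\pi_\star(D') = \text{(contributions from the push-forward)}$, but the cleanest route is to apply the projection formula to $R$ against $\pi^\star\pi_\star R$. First I would write, using \eqref{pistabassopistaralto}, the identity $\pi^\star\pi_\star R = 2R + \widetilde R$, and intersect both sides with $R$. On the left, the projection formula gives $R\cdot\pi^\star\pi_\star R = \pi_\star R\cdot\pi_\star R = \widetilde\tau^2$, where the last equality is \eqref{pistarR}. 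On the right we get $2R^2 + R\cdot\widetilde R$. Hence
\[
2R^2 = \widetilde\tau^2 - R\cdot\widetilde R.
\]
Now substitute \eqref{tautildequadro} for $\widetilde\tau^2$ and \eqref{erreerretilde} for $R\cdot\widetilde R$:
\[
2R^2 = \bigl(2bc\widetilde N - 4(b-1)c\widetilde{N_1}\bigr) - (b-1)c\Bigl(2\widetilde N_{2,2} + \tfrac{4}{3}\widetilde N_3\Bigr).
\]

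Next I would simplify the right-hand side using the partition identity $\widetilde N = \widetilde{N_1} + \widetilde{N_{2,2}} + \widetilde{N_3}$ recalled in the text. The idea is to absorb the $bc\widetilde N$ term by writing $b\widetilde N = (b-1)\widetilde N + \widetilde N$, so that
\[
2R^2 = 2c\widetilde N + 2(b-1)c\widetilde N - 4(b-1)c\widetilde{N_1} - 2(b-1)c\widetilde{N_{2,2}} - \tfrac{4}{3}(b-1)c\widetilde N_3.
\]
Then replace the middle $2(b-1)c\widetilde N$ by $2(b-1)c(\widetilde{N_1} + \widetilde{N_{2,2}} + \widetilde{N_3})$ and collect coefficients of each $\widetilde{N_i}$ inside the $(b-1)c$ bracket: the $\widetilde{N_1}$ coefficient becomes $2 - 4 = -2$, the $\widetilde{N_{2,2}}$ coefficient becomes $2 - 2 = 0$, and the $\widetilde N_3$ coefficient becomes $2 - \tfrac{4}{3} = \tfrac{2}{3}$. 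This yields
\[
2R^2 = 2c\widetilde N + (b-1)c\bigl(-2\widetilde{N_1} + \tfrac{2}{3}\widetilde N_3\bigr),
\]
and dividing by $2$ gives exactly the claimed formula
\[
R^2 = \Bigl(\widetilde N + (b-1)\bigl(\tfrac{1}{3}\widetilde N_3 - \widetilde{N_1}\bigr)\Bigr)c.
\]

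The only genuinely delicate point is justifying the local ramification relations \eqref{pistabassopistaralto} and \eqref{erreerretilde}, i.e.\ that $\pi^\star\pi_\star R = 2R + \widetilde R$ with the residual divisor $\widetilde R$ satisfying $R\cdot\widetilde R = (b-1)c\bigl(2\widetilde N_{2,2} + \tfrac43\widetilde N_3\bigr)$; but these are precisely the consequences of the simple-branching local model of $\pi\colon G\to A$ along the fibers, which is exactly the content of the explicit local analysis in \cite[from page 343 to the top of page 347]{HMo} and is stated above the proposition, so I would simply invoke it. Everything else is the bookkeeping indicated, driven by the projection formula and the partition $\widetilde N = \widetilde{N_1} + \widetilde{N_{2,2}} + \widetilde{N_3}$.
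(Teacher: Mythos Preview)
Your argument is correct and is precisely the computation the paper has in mind: the paper's own proof is just the one-line reference ``It is as in \cite[Theorem 2.15]{HMo}'', and the equations \eqref{pistarR}--\eqref{erreerretilde} together with $\widetilde N=\widetilde{N_1}+\widetilde{N_{2,2}}+\widetilde{N_3}$ are set up exactly so that the projection-formula identity $R\cdot\pi^{\star}\pi_{\star}R=(\pi_{\star}R)^{2}=\widetilde\tau^{2}$ yields $2R^{2}=\widetilde\tau^{2}-R\cdot\widetilde R$ and then the stated value. Your bookkeeping is clean and matches the intended route.
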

 \begin{proof} It is as in
    \cite[Theorem 2.15]{HMo}.
\end{proof}

By construction we have seen that over any point $y\in [J_{Y,(1)}]$ 
there exist $k-2$ rational curves $E_{i,y}\subset G_{y}$ such that 
$E_{i,y}^{2}=-1$. In particular there are 
$(k-2){\widetilde{N_{1}}}(b-1)c$ 
exceptional curves contained inside fibers of $f_{G}\colon G\to Y$ 
over $J_{Y,(1)}$ .

\begin{lem}\label{laesse} Let $u\colon G\to S$ be the contraction of these 
$(k-2){\widetilde{N_{1}}}(b-1)c$ rational curves. Then $S$ is a smooth surface. 
Moreover $K_{S}^{2}=K_{G}^{2}+ (k-2){\widetilde{N_{1}}}(b-1)c$.
\end{lem}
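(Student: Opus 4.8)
The plan is to verify the two assertions of Lemma \ref{laesse} separately, both by local analytic computation near the curves being contracted. First I would recall from the construction that each exceptional curve $E_{i,y}$ (for $y \in [J_{Y,(1)}]$ and $1 \le i \le k-2$) is a smooth rational curve with $E_{i,y}^2 = -1$, and that by the explicit local description of $\pi \colon G \to A$ and $\epsilon \colon A \to Y \times \mP^1$ given in \cite[pages 343--347]{HMo} these curves are pairwise disjoint and meet the rest of the fiber transversely. Indeed, from the fiber descriptions recorded above for types $(1_0)$, $(1_{j,0})$, $(1_{j,g})$ and $(1_{j,i})$, the curves $\bigcup_{i=1}^{k-2} E_{i,y}$ appear as a disjoint union of $(-1)$-curves in each such $G_y$. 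By Castelnuovo's contractibility criterion each $E_{i,y}$ can be contracted; since they are disjoint, contracting all of them simultaneously yields a morphism $u \colon G \to S$ with $S$ smooth. One should note here that the total number is $(k-2)$ curves over each of the $\widetilde{N_1}(b-1)c$ points of $[J_{Y,(1)}]$, giving the stated count $(k-2)\widetilde{N_1}(b-1)c$.

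Next I would compute $K_S^2$. The standard blow-up/blow-down formula says that if $u \colon G \to S$ is the blow-down of a single $(-1)$-curve, then $K_G = u^\star K_S + [\text{exc.}]$, whence $K_G^2 = K_S^2 - 1$, i.e. $K_S^2 = K_G^2 + 1$. Applying this once for each of the $(k-2)\widetilde{N_1}(b-1)c$ disjoint $(-1)$-curves — the disjointness is what lets one iterate without cross terms — gives
\[
K_S^2 = K_G^2 + (k-2)\,\widetilde{N_1}\,(b-1)\,c,
\]
which is the claimed formula. This is essentially formal once the geometric input is in place.

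The main obstacle, and the only genuinely nontrivial point, is justifying that the $E_{i,y}$ are indeed $(-1)$-curves that are pairwise disjoint and that contracting them keeps the surface smooth — in other words, that the configuration of exceptional curves inside each fiber $G_y$ over $[J_{Y,(1)}]$ is exactly as described. This is not something one proves from scratch here; it rests entirely on the very explicit local analytic model of the finite cover $\pi \colon G \to A$ composed with the blow-up $\epsilon$, carried out in \cite[from page 343 to the top of page 347]{HMo}, together with the fiber diagrams \cite[Diagram (2.12)]{HMo} reproduced in the text above. So the proof would consist of citing that local analysis to extract the intersection data ($E_{i,y}^2 = -1$, $E_{i,y} \cdot E_{i',y} = 0$ for $i \ne i'$, $E_{i,y}$ meeting the residual curve transversely in one point), and then invoking Castelnuovo contractibility and the blow-down formula. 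I would keep the proof short, pointing to the relevant pages of \cite{HMo} for the configuration and treating the rest as a routine application of surface theory.
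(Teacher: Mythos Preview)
Your proposal is correct and follows the same approach as the paper, which simply writes ``It follows straightly from Castelnuovo contraction theorem.'' You have spelled out the details the paper leaves implicit --- disjointness of the $E_{i,y}$, the iterated blow-down formula for $K^{2}$ --- but the underlying argument is identical.
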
 
\begin{proof} It follows straightly from Castelnuovo contraction 
theorem.
\end{proof}

Now we want to compute the invariants of $S$.

\begin{prop}\label{eccez=0} Let $E$ be any $-1$ rational curve which 
    is contracted by $u\colon G\to S$. Then $R\cdot E=0$.
   
 \end{prop}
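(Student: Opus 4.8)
The plan is to use the local analytic description of the finite cover $\pi\colon G\to A$ given in \cite{HMo} near a point $y\in[J_{Y,(1)}]$, together with the projection formula $\pi_\star R=\widetilde\tau$ from \eqref{pistarR}. The $-1$ curves $E=E_{i,y}$ contracted by $u$ all lie inside fibers $G_y$ over points of $[J_{Y,(1)}]$, and by the case analysis preceding Lemma \ref{tipodueduetre} (the four cases $(1_0),(1_{j,0}),(1_{j,g}),(1_{j,i})$) each such $E_{i,y}$ is mapped by $\pi$ onto (a component of) the exceptional curve $e_y$ of the blow-up $\epsilon\colon A\to Y\times\mP^1$, or onto a fiber component, in such a way that $\pi|_{E_{i,y}}$ is an isomorphism onto its image. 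The key point is that the branch locus $\widetilde\tau=\epsilon^\star\tau-2E_A$ of $\pi$ does not meet the exceptional locus: indeed $\tau=\nu^\star\sigma$ passes through the points of $[I_Y]$ with multiplicity exactly $2$ (the $\sigma_i$ meet transversally, and $\nu$ is unramified or triply branched there), so $\widetilde\tau$ is the strict transform and is disjoint from $E_A$ over $[I_{Y,(1)}]$.

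First I would fix $y\in[J_{Y,(1)}]$ and one of the curves $E=E_{i,y}$, and invoke the explicit local model of $\pi$ on pages 343–347 of \cite{HMo}: in suitable coordinates the cover is given near $e_y$ by an equation of the form $w^k = (\text{unit})\cdot(\text{local equation of }\widetilde\tau)$, and the $E_{i,y}$ are the components of $\pi^{-1}(e_y)$. Second, I would compute $R\cdot E$ by the projection formula for the finite morphism $\pi$: writing $E'=\pi(E)\subseteq e_y\subset A$, one has $R\cdot E = R\cdot \pi^\star(\text{something supported on } e_y)$ up to the degree of $\pi|_E$, and then $R\cdot\pi^\star D = \pi_\star R\cdot D = \widetilde\tau\cdot D$ for any divisor class $D$ on $A$ represented by a cycle supported on $e_y$. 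Since $\widetilde\tau$ is disjoint from $e_y$, this intersection number is $0$. Third, I would handle the cases $(1_{j,0}),(1_{j,g}),(1_{j,i})$ where there is also the curve $E'_y$ present in $G_y$: the $E_{i,y}$ are still mapped to $e_y$ (or, in case $(1_0)$, $E'$ is the relevant $-2$ curve and the $E_{i,y}$ map to $e_y$), and the same disjointness argument applies; one must only check that $\pi|_{E_{i,y}}$ is unramified, which is exactly the content of "simple branch on the fibers" used already in \eqref{pistabassopistaralto}.

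The main obstacle is bookkeeping rather than conceptual: I need to make sure that in each of the four sub-cases of type $(1)$ the curve $E_{i,y}$ really does map \emph{isomorphically} onto a curve that is disjoint from the branch divisor $\widetilde\tau$, and that no $E_{i,y}$ accidentally meets a point where $\widetilde\tau$ crosses $e_y$. Here I would lean on the hypotheses imposed in the construction — the $\sigma_i$ meet transversally, each $\pi_X$-fiber contains at most one node of $\sigma$, and only nodes of type $(1)$ are blown up in $\epsilon\colon A\to Y\times\mP^1$ — which together guarantee that $\widetilde\tau\cap E_A=\emptyset$ over $[I_{Y,(1)}]$. Once that is in place, $R\cdot E = \widetilde\tau\cdot(\text{image cycle}) = 0$ follows formally from $\pi_\star R=\widetilde\tau$ and the projection formula, and the proposition is proved.
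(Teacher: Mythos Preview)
Your proposal has a genuine gap: the central geometric claim that the branch divisor $\widetilde\tau$ is disjoint from the exceptional curve $e_y$ is false. At a point of $[I_{Y,(1)}]$ the divisor $\tau$ has an ordinary node (two branches of the $\sigma_i$'s meeting transversally), so after blowing up, the strict transform $\widetilde\tau=\epsilon^\star\tau-2E_A$ meets $e_y$ in exactly two points, one for each branch. Numerically $\widetilde\tau\cdot e_y=\epsilon^\star\tau\cdot e_y-2E_A\cdot e_y=0-2(-1)=2$, not $0$; this is precisely the value used in the proof of Corollary~\ref{erretildeconmenouno}. So the disjointness argument cannot yield $R\cdot E=0$.

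There is a second difficulty even before that step: the projection formula does not let you pass from $R\cdot E$ to $\widetilde\tau\cdot(\text{image cycle})$ in the way you suggest. The curve $E=E_{i,y}$ is only one of the $k-1$ irreducible components of $\pi^{-1}(e_y)$ (the remaining one being the $(-2)$-curve $E'_y$, which maps $2$-to-$1$ onto $e_y$), so $E$ is not of the form $\pi^\star D$ for any $D$ on $A$. What the projection formula does give is $\pi^\star\widetilde\tau\cdot E=\widetilde\tau\cdot\pi_\star E=\widetilde\tau\cdot e_y=2$; combined with $\pi^\star\widetilde\tau=\pi^\star\pi_\star R=2R+\widetilde R$ this only yields $2R\cdot E+\widetilde R\cdot E=2$, which by itself does not separate the two summands.

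The paper's argument avoids both issues by using the ramification formula $K_{G|Y}=\pi^\star K_{A|Y}+R$ instead of $\pi_\star R=\widetilde\tau$. Intersecting with $E$ and using $\pi_\star E=L=e_y$ gives
\[
R\cdot E \;=\; K_{G|Y}\cdot E-\pi^\star K_{A|Y}\cdot E \;=\; K_G\cdot E-K_A\cdot L,
\]
since $E$ and $L$ are vertical. Both $E\subset G$ and $L\subset A$ are $(-1)$-curves, so adjunction gives $K_G\cdot E=K_A\cdot L=-1$, hence $R\cdot E=0$. This is a one-line computation once you observe that $\pi|_E\colon E\to L$ is an isomorphism; no analysis of where $\widetilde\tau$ meets $e_y$ is needed.
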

 \begin{proof} We use the ramification divisor formula: 
     $K_{G|Y}=\pi^{\star}K_{A|Y}+R$. Let $E$ be any $-1$ rational curve which 
    is contracted by $u\colon G\to S$ then the finite morphism 
    $\pi\colon G\to A$ restricts to an isomorphism $\pi_{|E}\colon 
    E\to L\subset A$ where $L$ is an exceptional curve produced via 
    the blow-up $\epsilon\colon A\to Y\times\mP^{1}$. In particular 
    $\pi_{\star}E=L$. Then $R\cdot 
     E=-1-\pi^{\star}(K_{A|Y})E=-1-K_{A|Y}\cdot L=-1+1=0$.
 \end{proof}
 
 \begin{cor}\label{erretildeconmenouno} Let $E$ be any $-1$ rational curve which 
    is contracted by $u\colon G\to S$. Then $E\cdot{\widetilde R}=2$.
\end{cor}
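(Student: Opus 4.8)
The plan is to reduce the computation of $E\cdot\widetilde R$ to an intersection number on $A$, exploiting the two relations already recorded: the pull-back/push-forward identity \eqref{pistabassopistaralto} and the vanishing $R\cdot E=0$ of Proposition \ref{eccez=0}. Rewriting \eqref{pistabassopistaralto} as $\widetilde R=\pi^{\star}\pi_{\star}R-2R$ and intersecting with $E$ gives
\[
E\cdot\widetilde R=E\cdot\pi^{\star}\pi_{\star}R-2\,(E\cdot R)=E\cdot\pi^{\star}\pi_{\star}R,
\]
since $E\cdot R=0$ by Proposition \ref{eccez=0}. So everything comes down to evaluating $E\cdot\pi^{\star}\pi_{\star}R$.

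Next I would apply the projection formula for the finite morphism $\pi\colon G\to A$, namely $E\cdot\pi^{\star}D=\pi_{\star}E\cdot D$ for a divisor $D$ on $A$. Taking $D=\pi_{\star}R$ and using that $\pi_{\star}R=\widetilde\tau$ by \eqref{pistarR}, together with $\pi_{\star}E=L$ as established inside the proof of Proposition \ref{eccez=0} (where $L$ is the exceptional curve of $\epsilon\colon A\to Y\times\mP^{1}$ to which $E$ maps isomorphically), we obtain
\[
E\cdot\widetilde R=\pi_{\star}E\cdot\pi_{\star}R=L\cdot\widetilde\tau .
\]
It then remains to compute $L\cdot\widetilde\tau$ on $A$. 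Recalling $\widetilde\tau=\epsilon^{\star}\tau-2E_{A}$, the curve $L$ is one of the $(b-1)c\,\widetilde{N_{1}}$ exceptional curves blown up by $\epsilon$, and since $[I_{Y,(1)}]$ is a \emph{reduced} length-$(b-1)c\,\widetilde{N_{1}}$ scheme these exceptional curves are pairwise disjoint $(-1)$-curves. Hence $L\cdot\epsilon^{\star}\tau=0$ (projection formula again) and $L\cdot E_{A}=L^{2}=-1$, so $L\cdot\widetilde\tau=0-2(-1)=2$, which yields $E\cdot\widetilde R=2$.

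There is essentially no serious obstacle here; the only point requiring a little care is the identification $\pi_{\star}E=L$ with $L^{2}=-1$ and the fact that $L$ is a connected component of the exceptional divisor $E_{A}$ rather than a more complicated subcurve — this is exactly what guarantees $L\cdot\epsilon^{\star}\tau=0$ and $L\cdot E_{A}=-1$, and it follows from the reducedness of $[I_{Y,(1)}]$ noted before \eqref{autointe}. One should also double-check the sign conventions in \eqref{pistabassopistaralto} and in the definition of $\widetilde\tau$, but with those in place the chain of equalities above gives the claim directly.
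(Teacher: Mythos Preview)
Your proposal is correct and follows essentially the same route as the paper's own proof: both use $\pi^{\star}\pi_{\star}R=2R+\widetilde R$ together with $E\cdot R=0$ to reduce $E\cdot\widetilde R$ to $L\cdot\widetilde\tau$, and then compute the latter from $\widetilde\tau=\epsilon^{\star}\tau-2E_{A}$ and $L\cdot E_{A}=-1$. Your write-up is in fact more careful in justifying $L\cdot\epsilon^{\star}\tau=0$ and $L\cdot E_{A}=L^{2}=-1$, but the underlying argument is identical.
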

\begin{proof}  By Equation \ref{pistabassopistaralto} and by 
    Proposition \ref{eccez=0} we have: 
    $0=E\cdot 
    R=E\cdot\frac{1}{2}(\pi^{\star}\pi_{\star}R-\widetilde R)=
    \frac{1}{2}(L\cdot\widetilde\tau-E\cdot\widetilde R)$. Since 
    $\widetilde\tau=\epsilon^{\star}(\tau)-2E_{A}$ and $E_{A}\cdot 
    L=-1$ then the claim follows.
    \end{proof}
 
    We now consider the $(-2)$ rational curves of $G$. We notice that if 
    $y\in[J_{Y,{1_{(2,0)}}}]\cup [J_{Y,{1_{(2k-2,g)}}}]$ then there 
    is a $(-2)$ 
    rational curve
    $G'_{y}\subset G_{y}$ which is mapped to a fiber of the canonical 
    projection $\pi_{Y}\colon Y\times\mP^{1}\to Y$; that is: 
    $\epsilon\circ\pi(G'_{y})=\pi_{Y}^{-1}(y)\subset Y\times\mP^{1}$. Next 
    lemma deals with the $(-2)$ curves which are mapped to the 
    exceptional curves of the morphism $\epsilon\colon A\to 
    Y\times\mP^{1}$.

 \begin{lem}\label{erretildeconmenodue} Let $E$ be any $(-2)$ rational curve which 
    is contained in a fiber of $f_{G}\colon G\to Y$ and which is 
    mapped $2$-to-$1$ to a cuve $L\subset A$ such that 
    $\epsilon(L)\in [I_{Y}]$. 
    Then $R\cdot E=2$. Moreover $E\cdot\widetilde R=0$
 \end{lem}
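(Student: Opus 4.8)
The plan is to mimic the argument of Proposition \ref{eccez=0} and Corollary \ref{erretildeconmenouno}, but now with a curve $E$ that maps $2$-to-$1$ onto its image rather than isomorphically. First I would record the local picture: by the explicit analysis of \cite[pages 343--346]{HMo} recalled in the fiber discussion above, such an $E$ is a $(-2)$-curve inside a fiber $G_y$ with $y\in [J_{Y,(1)}]$, and the finite cover $\pi\colon G\to A$ restricts to a degree-$2$ map $\pi_{|E}\colon E\to L$, where $L$ is one of the exceptional curves of $\epsilon\colon A\to Y\times\mP^1$, so $L^2=-1$ and $K_{A|Y}\cdot L=1$. Consequently $\pi^\star L = E + (\text{other components})$; more to the point, $\pi_\star E = 2L$ and $\pi^\star L\cdot E$ can be computed from $\pi^\star L\cdot\pi^\star(\text{anything})$ via the projection formula, since $\pi^\star L\cdot E = \tfrac12\,\pi^\star L\cdot\pi^\star\pi_\star E\cdot$\dots — I will instead go through the ramification formula directly, which is cleaner.

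Next I would apply the relative ramification divisor formula $K_{G|Y}=\pi^\star K_{A|Y}+R$ and intersect with $E$. On the left, since $E$ is a $(-2)$-curve in a fiber, adjunction gives $K_G\cdot E = 0$, and $K_{G|Y}\cdot E = K_G\cdot E = 0$ because $E$ lies in a fiber (so $f_G^\star K_Y\cdot E=0$). On the right, $\pi^\star K_{A|Y}\cdot E = K_{A|Y}\cdot\pi_\star E = K_{A|Y}\cdot (2L) = 2\,(K_{A|Y}\cdot L) = 2$, using the projection formula and $K_{A|Y}\cdot L = K_A\cdot L = 1$ (again $L$ is in a fiber of $\pi_Y\circ\epsilon$ up to the blow-up, so the relative and absolute canonical agree on $L$). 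Hence $0 = 2 + R\cdot E$, which gives $R\cdot E = -2$; here I must be careful about the sign convention for $R$, and I expect the correct statement to come out as $R\cdot E = 2$ once the orientation of $L$ (an exceptional curve, with $K_A\cdot L=+1$ but appearing with the opposite sign in $K_{A|Y}=\epsilon^\star K_{(Y\times\mP^1)|Y}+E_A$) is pinned down exactly as in Proposition \ref{eccez=0}, where the analogous computation yielded $R\cdot E = -1 + 1 = 0$ for the $(-1)$-curves. This bookkeeping of which exceptional curve $L$ is, and with what sign it enters $K_{A|Y}$, is the one delicate point.

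For the second assertion, I would intersect Equation \eqref{pistabassopistaralto}, $\pi^\star\pi_\star R = 2R+\widetilde R$, with $E$. By Equation \eqref{pistarR}, $\pi_\star R = \widetilde\tau$, so the left side is $\pi^\star\widetilde\tau\cdot E = \widetilde\tau\cdot\pi_\star E = \widetilde\tau\cdot(2L)$. Now $\widetilde\tau = \epsilon^\star(\tau)-2E_A$, and $L$ is a component of the exceptional divisor $E_A$ with $L^2=-1$, disjoint from the other exceptional components and from $\epsilon^\star\tau$ (the strict transform meets $E_A$ but $L$ itself is not met by $\epsilon^\star\tau$ once we have separated the node — this is exactly the geometry used in Corollary \ref{erretildeconmenouno}), so $\widetilde\tau\cdot L = -2E_A\cdot L = -2(-1) = 2$, giving $\pi^\star\widetilde\tau\cdot E = 4$. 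On the right, $2R\cdot E + \widetilde R\cdot E = 2\cdot 2 + \widetilde R\cdot E = 4 + \widetilde R\cdot E$. Therefore $\widetilde R\cdot E = 0$, as claimed. The main obstacle, as noted, is getting every sign in the relative canonical bundle formula and in the intersection of $\widetilde\tau$ with the specific exceptional curve $L$ exactly right; once the local model of $\pi$ near $E$ from \cite[pages 343--346]{HMo} is in hand this is a finite check, entirely parallel to Proposition \ref{eccez=0} and Corollary \ref{erretildeconmenouno}.
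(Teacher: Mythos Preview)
Your approach is exactly the paper's: intersect the relative ramification formula $K_{G|Y}=\pi^\star K_{A|Y}+R$ with $E$, use $\pi_\star E=2L$ and the projection formula, then use Equation~\eqref{pistabassopistaralto} together with $\widetilde\tau\cdot L=2$ for the second claim. The second half of your argument is correct as written.

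The only problem is a sign slip in the first half, not a ``sign convention for $R$''. For an exceptional $(-1)$-curve $L$ on $A$ adjunction gives $K_A\cdot L=-1$, not $+1$; equivalently, exactly the computation you quote from Proposition~\ref{eccez=0} reads $-K_{A|Y}\cdot L=+1$, i.e.\ $K_{A|Y}\cdot L=-1$. With this correction your line becomes
\[
0 \;=\; E\cdot K_{G|Y} \;=\; \pi^\star K_{A|Y}\cdot E + R\cdot E \;=\; 2L\cdot K_{A|Y} + R\cdot E \;=\; -2 + R\cdot E,
\]
so $R\cdot E=2$ directly, and your computation of $\widetilde R\cdot E=0$ then stands. (A minor remark: $\epsilon^\star\tau$ does meet $L$ geometrically, since the blown-up point lies on $\tau$; what you need, and what the projection formula gives, is only the numerical vanishing $\epsilon^\star\tau\cdot L=0$.)
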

 \begin{proof} If $E$ is a $(-2)$ curve contained in a fiber of 
     $f_{G}\colon G\to Y$ which is mapped to an 
    exceptional curve $L$ of the morphism 
    $\epsilon\colon A\to Y\times\mP^{1}$ then $\pi_{|_E}\colon E\to L$ is a 
    $2$-to-$1$ covering. Hence $\pi_{\star}E=2L$. 
    Then $0=E\cdot K_{G|Y}=E\cdot 
    (\pi^{\star}K_{A|Y}+R)=2L^{2}+E\cdot R$. This implies $R\cdot 
    E=2$. Then 
    $E\cdot{\widetilde{R}}=E\cdot\pi^{\star}\pi_{\star}R-4=
    2L\cdot\widetilde\tau-4=0$.
    \end{proof}
 
 By the 
local description of $f\colon S\to Y$ we know that there are 
$(b-1)c\left[{\widetilde{N}}_{{\rm{sing}}}
+\sum_{j=0}^{[\frac{k}{2}]}({\widetilde{M_{j,0}}}+{\widetilde{M_{j,g}}})\right]$ mutually disjoint $(-2)$ 
rational curves, where the $[M_{j,i}]$ are given in \cite[Formula 
1.35]{HMo}.
\begin{lem}\label{combinatorics}
    $\sum_{j=0}^{[\frac{k}{2}]}({\widetilde{M_{j,0}}}
    +{\widetilde{M_{j,g}}})\leq {\widetilde{N_{1}}}$.
\end{lem}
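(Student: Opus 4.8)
The plan is to read the inequality off from the explicit description of the fibres of $f$, using only the analysis recalled above together with Lemma \ref{tipodueduetre} and Proposition \ref{daGaS}. Fix a node of $\sigma$, i.e.\ a point of $[I_{X}]$. Over it lie, up to the $\mathfrak{S}_{k}$-action, exactly ${\widetilde{N_{1}}}$ points of $[J_{Y,(1)}]$ (this is the meaning of ${\widetilde{N_{1}}}$ in the formula for $I_{Y}$), while by Lemma \ref{tipodueduetre} every point of $[J_{Y,(2,2)}]\cup[J_{Y,(3)}]$ has smooth fibre; hence by Proposition \ref{daGaS} the ${\widetilde{N}}_{{\rm{sing}}}$ points over our node with singular fibre all belong to $[J_{Y,(1)}]$.

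Next I would split $[J_{Y,(1)}]$ over the fixed node according to the four sub-types. Inspecting the descriptions: for $y\in[J_{Y,(1_{0})}]$ the fibre $F_{y}$ is reducible (a genus-$(g-1)$ curve meeting a $(-2)$-curve), and for $y\in[J_{Y,(1_{j,i})}]$ with $1\le i\le[g/2]$ the fibre $F_{y}$ is again reducible; whereas for $y\in[J_{Y,(1_{j,0})}]\cup[J_{Y,(1_{j,g})}]$ the fibre $F_{y}$ is a \emph{smooth} semistable curve. Therefore the points of $[J_{Y,(1)}]$ over our node with singular fibre are precisely those of types $(1_{0})$ and $(1_{j,i})$ with $1\le i\le[g/2]$, whose number is ${\widetilde{N}}_{{\rm{sing}}}$, while those with smooth fibre are precisely those of types $(1_{j,0})$ and $(1_{j,g})$, whose number is, by the definition of the $M_{j,\bullet}$ in \cite[Formula 1.35]{HMo}, equal to $\sum_{j=0}^{[k/2]}\big({\widetilde{M_{j,0}}}+{\widetilde{M_{j,g}}}\big)$. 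This gives the identity
\[
{\widetilde{N_{1}}}={\widetilde{N}}_{{\rm{sing}}}+\sum_{j=0}^{[k/2]}\big({\widetilde{M_{j,0}}}+{\widetilde{M_{j,g}}}\big),
\]
and since ${\widetilde{N}}_{{\rm{sing}}}\ge 0$ the lemma follows at once.

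I expect the only delicate point to be the last identification: that, enumerated without redundancy as in \cite[Formula 1.35]{HMo}, the smooth-fibre sub-types $(1_{j,0})$ and $(1_{j,g})$ are counted exactly by the indices $0\le j\le[k/2]$, since a split configuration with a rational component could a priori be recorded both as a $(1_{j,0})$ and as a $(1_{k-j,g})$ one. This is bookkeeping internal to \cite{HMo} and needs no new geometric input; granting it, the displayed identity — and hence the lemma — is immediate. If one prefers to bypass \cite[Formula 1.35]{HMo} entirely, it suffices to observe that to every $y$ of type $(1_{j,0})$ or $(1_{j,g})$ the description above associates exactly one of the $(-2)$-curves $E'_{y}\subset G_{y}$, and these curves sit in pairwise distinct fibres over distinct points of $[J_{Y,(1)}]$; since the latter set has ${\widetilde{N_{1}}}(b-1)c$ elements, the number $(b-1)c\sum_{j=0}^{[k/2]}({\widetilde{M_{j,0}}}+{\widetilde{M_{j,g}}})$ of such curves is at most ${\widetilde{N_{1}}}(b-1)c$, which is the claim.
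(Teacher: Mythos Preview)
Your argument is correct and in fact establishes more than the lemma asks: you recover the identity ${\widetilde{N_{1}}}={\widetilde{N}}_{{\rm{sing}}}+\sum_{j}({\widetilde{M_{j,0}}}+{\widetilde{M_{j,g}}})$, which is precisely Equation~\ref{tuttoli} of the paper, and then deduce the inequality from ${\widetilde{N}}_{{\rm{sing}}}\ge 0$. The paper's own proof is different and purely combinatorial: it simply invokes the set-theoretic decompositions $[N_{1}]=\bigcup_{j}[M_{j}]$ and $[M_{j}]=\bigcup_{i}[M_{j,i}]$ from \cite[Formulae 1.28, 1.30, 1.36]{HMo}, so that the sets $[M_{j,0}]\cup[M_{j,g}]$ form a subcollection of a partition of $[N_{1}]$ and the cardinality inequality is immediate. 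Your route is geometric rather than combinatorial---it reads the partition off from the dichotomy smooth/singular among the type-$(1)$ fibres---and has the advantage of yielding Equation~\ref{tuttoli} directly, which the paper states separately afterwards. The paper's route is shorter because it never touches the geometry of $F$, but it requires the reader to unpack the cited formulae in \cite{HMo}; your concern about possible double-counting between $(1_{j,0})$ and $(1_{k-j,g})$ is exactly the bookkeeping hidden in those formulae, and your fallback argument via the distinct $(-2)$-curves $E'_{y}$ is a clean way to avoid it.
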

\begin{proof} By \cite[Formula 1.30]{HMo} 
    $[N_{1}]=\cup_{i=0}^{[\frac{k}{2}]}[M_{j}]$ where $[M_{j}]$ is 
    defined in \cite[Formula 1.28]{HMo}. Since by \cite[Formula 
    1.36]{HMo} $[M_{j}]=\cup_{i=1}^{g}[M_{j,i}]$ the claim follows.
 \end{proof}

Moreover let 
\begin{equation}\label{discrepanza}
    e:=\sum_{j=0}^{[\frac{k}{2}]}(\widetilde{M_{j,0}}+{\widetilde{M_{j,g}}}).
    \end{equation}
    \noindent
    Obviously 
$(b-1)ce=\sum _{j=0}^{[\frac{k}{2}]}(J_{Y,1_{j,0}}+J_{Y,1_{j,g}})$. We set 
$$
\bigcup _{j=0}^{[\frac{k}{2}]}([J_{Y,1_{(j,0)}}]\cup [J_{Y,1_{(j,g)}}])=:\{a_{1},\ldots 
,a_{(b-1)ce}\}\subset Y.
$$
 We recall that the morphism $h\colon S\to F$ 
contracts two rational curves of any fiber of $f_{S}\colon S\to Y$ over $\{a_{1},\ldots 
,a_{(b-1)ce}\}\subset Y$. Moreover the fiber $F_{a_{i}}$ $i=1,\ldots 
,(b-1)ce$ of $f\colon F\to Y $is smooth, but over a neighbourhood $U_{i}$ of 
$a_{i}\in Y$ the map 
$\rho_{F}\colon F_{U}\dashrightarrow U\times\mP^{1}$ is {\it{only a rational 
one}}. We know that the fiber $f_{S}^{\star}(a_{i}):=S_{a_{i}}=
F'_{a_{i}}+E_{a_{i}}+E'_{a_{i}}\subset S$ where $F'_{a_{i}}$ is a smooth curve 
of genus $g$, $E_{a_{i}}$ is a $(-2)$ rational curve such that 
$E_{a_{i}}\cdot F'_{a_{i}}=1$. Moreover we know that 
$E'_{a_{i}}$ is a $-1$-rational curve and $E_{a_{i}}\cdot E'_{a_{i}}=1$.

\begin{cor}\label{calcolodiscrepanza}
    Let $h\colon S\to F$ be the morphism which factorises the 
    morphism $\zeta\colon G\to F$. Then $K_{F}^{2}=K_{S}^{2}+2(b-1)ce$.
\end{cor}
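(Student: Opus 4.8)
The plan is to factor the birational morphism $h\colon S\to F$ as a composition of two successive contractions of pairwise disjoint $(-1)$-curves, and then apply Castelnuovo's contraction theorem twice, keeping track of how the self-intersection of the canonical divisor changes.

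First I would recall, from the fiber analysis preceding the statement, that over each of the $(b-1)ce$ points $a_i\in Y$ the fiber of $f_S\colon S\to Y$ is $S_{a_i}=F'_{a_i}+E_{a_i}+E'_{a_i}$, a chain in which $F'_{a_i}$ is smooth of genus $g$, $E_{a_i}$ is a $(-2)$-curve, $E'_{a_i}$ is a $(-1)$-curve, and $E_{a_i}\cdot E'_{a_i}=E_{a_i}\cdot F'_{a_i}=1$ while $E'_{a_i}\cdot F'_{a_i}=0$. Since the $a_i$ are distinct and each $E'_{a_i}$ lies in the fiber of $f_S$ over $a_i$, the $(b-1)ce$ curves $E'_{a_i}$ are pairwise disjoint. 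Hence by Castelnuovo's theorem their simultaneous contraction $h'\colon S\to S'$ produces a smooth surface with
$$K_{S'}^{2}=K_{S}^{2}+(b-1)ce.$$

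Next I would examine the images of the $(-2)$-curves. Because $E_{a_i}$ meets the contracted locus only at the single point $E_{a_i}\cap E'_{a_i}$, transversally, its image $\overline{E}_{a_i}\subset S'$ is a smooth rational curve with $\overline{E}_{a_i}^{2}=E_{a_i}^{2}+1=-1$, and these images remain pairwise disjoint, lying in distinct fibers of the induced fibration $S'\to Y$. A second application of Castelnuovo's theorem contracts them by a morphism $h''\colon S'\to F'$ with $K_{F'}^{2}=K_{S'}^{2}+(b-1)ce$. I would then identify $F'$ with $F$: by construction $h''\circ h'$ is exactly the pair of contractions needed over each $a_i$ to pass from the semistable fiber of $S$ to the smooth fiber $F_{a_i}$ of $f\colon F\to Y$ (first contract $E'_{a_i}$, then the resulting image of $E_{a_i}$), so $h''\circ h'=h$ and $F'=F$. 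Combining the two displayed equalities yields $K_{F}^{2}=K_{S}^{2}+2(b-1)ce$.

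The computation is routine once the order of the two contractions is fixed; the only point that needs care is verifying that after the first contraction each $E_{a_i}$ becomes a genuine $(-1)$-curve, which is precisely what the transversality of the single intersection $E_{a_i}\cdot E'_{a_i}=1$ guarantees. No other interactions occur, since all the relevant curves lie in distinct fibers of $f_S$ over the distinct points $a_i$.
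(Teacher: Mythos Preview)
Your proof is correct and follows essentially the same approach as the paper: the authors likewise observe that $h\colon S\to F$ factors as first contracting the $(-1)$-curves $E'_{a_i}$ and then the images of the $(-2)$-curves $E_{a_i}$, for a total of $2(b-1)ce$ simple contractions, whence $K_F^2=K_S^2+2(b-1)ce$. Your write-up is in fact more detailed than the paper's, which simply asserts that $h$ is a composition of $2(b-1)ce$ simple contractions without spelling out that each $E_{a_i}$ becomes a genuine $(-1)$-curve after the first step.
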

\begin{proof} By Harris Morrison construction the contraction of 
    $k-2$ rational curves in each one of the $(b-1)c{\widetilde N}_{1}$ 
    singular fibers of $f_{G}\colon G\to Y$ gives the morphism 
    $u\colon G\to S$. To reach $F$ we need to contract first 
    $E'_{a_{i}}$ and then the image of $E_{a_{i}}$ for every 
    $i=1,\ldots, (b-1)ce$. Hence $\zeta\colon G\to F$ is given by 
    $h\zeta=h\circ u$ where $h\colon S\to F$ is a composition of 
    $2(b-1)ce$ simple contractions. Hence $K_{F}^{2}=K_{S}^{2}+2(b-1)ce$.
    
\end{proof}

By Lemma \ref{tipodueduetre} 
the points of type $(2,2)$ and the points of type $(3)$ 
    give a smooth semistable fiber of $f\colon F\to Y$. By 
    Proposition \ref{daGaS} there are 
    ${\widetilde{N}}_{{\rm{sing}}}(b-1)c$ singular fibers of $f\colon F\to 
    Y$. Set 
    $$
    r:={\widetilde{N}}_{{\rm{sing}}}(b-1)c.
    $$
     Let 
	    $\{y_{i}\}_{i=1}^{r}={\rm{Sing}}(f)$ 
	    be the subset of $Y$ given by the points $y\in Y$ 
	    such that $f^{-1}(y)=F_{y}$ is a singular fiber. By the 
	    local description of the singular fibers of $f\colon F\to 
	    Y$ it follows that for any $l=1,\ldots, r$, 
	    $f^{-1}(y_{l})=F_{y_{l}}=F_{y_{l},i}\cup E_{y_{l}}\cup 
	    F_{y_{l},g-i}$ 
	    where $E_{y_{l}}$ is a $(-2)$ curve, 
	    $F_{y_{l},i}\cdot E_{y_{l}}=
	    F_{y_{l},g-i}\cdot E_{y_{l}}=1$ and obviously 
	    $F_{y_{l},i}\cdot
	    F_{y_{l},g-i}=0$, or $F_{y_{l}}=F'_{y_{l}}\cup E$ where 
	$F'_{y_{l}}$ is a genus $g-1$ curve and
	$E$ is a rational $(-2)$ curve.
\medskip
	
  We study the invariants of the surface $F$.  
\begin{prop}\label{eulerodif}Let $f\colon F\to Y$ be a Harris 
     Morrison family. Then the topological Euler characteristic of 
     $F$ is:
     $$
     e(F)=4(g-1)(g(Y)-1)+2r.
     $$
    
\end{prop}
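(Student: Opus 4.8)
The plan is to use the standard topological additivity of the Euler characteristic for a fibration, applied to the morphism $f\colon F\to Y$. For a semistable fibration over a smooth complete curve $Y$ with general fiber a smooth curve of genus $g$, one has the formula $e(F)=e(\text{general fiber})\cdot e(Y)+\sum_{y}\bigl(e(F_y)-e(F_{\mathrm{gen}})\bigr)$, where the sum runs over the finitely many points $y\in Y$ with singular fiber. Since $e(Y)=2-2g(Y)$ and the general fiber is smooth of genus $g$, the first term contributes $(2-2g)(2-2g(Y))=4(g-1)(g(Y)-1)$. So the entire computation reduces to evaluating the local correction terms $e(F_y)-e(F_{\mathrm{gen}})$ at each singular fiber.

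Next I would invoke Proposition \ref{daGaS}: the singular fibers of $f$ are exactly the $r={\widetilde{N}}_{{\rm{sing}}}(b-1)c$ fibers over the points $\{y_l\}_{l=1}^r$, and each such fiber is of one of the two types recorded just before the statement — either $F_{y_l}=F_{y_l,i}\cup E_{y_l}\cup F_{y_l,g-i}$ with $E_{y_l}$ a $(-2)$-curve meeting each of the two components in one point, or $F_{y_l}=F'_{y_l}\cup E$ with $F'_{y_l}$ of genus $g-1$ and $E$ a $(-2)$-curve meeting it in two points. The key point is that in both cases the singular fiber is a stable/semistable curve of arithmetic genus $g$ with exactly one node (equivalently, one $(-2)$-curve that carries the whole discrepancy with a smooth fiber), so the normalization picture is the same: smoothing the single node changes the Euler characteristic by exactly $1$. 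Concretely, for a nodal curve $C$ of arithmetic genus $g$ with $\delta$ nodes one has $e(C)=2-2g+\delta$, whereas $e(F_{\mathrm{gen}})=2-2g$; hence $e(F_y)-e(F_{\mathrm{gen}})=\delta$. I would check that in each of the two fiber types the relevant stable model has $\delta=1$: in the first type, $E_{y_l}$ is a smooth rational $(-2)$-curve meeting the rest in two points, so its contraction produces a single node; in the second type, $E$ is again a smooth rational curve meeting $F'_{y_l}$ in two points (an internal node once contracted). Either way the local contribution is $1$.

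Summing, $e(F)=4(g-1)(g(Y)-1)+\sum_{l=1}^{r}1=4(g-1)(g(Y)-1)+r$ — wait, this would give $+r$, not $+2r$, so the subtlety I must get right is the precise contribution of each singular fiber. Rereading the fiber descriptions, the $(-2)$-curve $E_{y_l}$ (resp.\ $E$) is itself a component of the fiber $F_{y_l}$ that is \emph{present} in $F$ (these curves are not contracted by the semistable reduction, since $F$ is the semistable, not the stable, model). So I should compute $e(F_{y_l})$ directly as a union of smooth curves glued at points: for the type $F_{y_l,i}\cup E_{y_l}\cup F_{y_l,g-i}$ one gets $e = e(F_{y_l,i})+e(E_{y_l})+e(F_{y_l,g-i}) - (\text{number of intersection points counted with multiplicity}) = (2-2i)+2+(2-2(g-i)) - 2\cdot 2 = 2-2g+2$, so the correction is $2$; similarly for $F'_{y_l}\cup E$ one gets $(2-2(g-1))+2-2\cdot 2 = 2-2g+2$, again a correction of $2$. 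Hence each of the $r$ singular fibers contributes $2$, and $e(F)=4(g-1)(g(Y)-1)+2r$, as claimed. The main obstacle is precisely this bookkeeping — making sure one uses the semistable model $F$ (where the $(-2)$-curves survive) rather than the stable model, and correctly counting each node of the fiber with its contribution to the Euler characteristic — and verifying uniformly across both listed fiber types that the answer is $2$ per singular fiber.
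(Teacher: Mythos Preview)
Your approach is exactly the paper's: apply the standard fibration formula $e(F)=(2-2g)(2-2g(Y))+\sum_{y}\bigl(e(F_y)-(2-2g)\bigr)$ and verify that each of the $r$ singular fibers contributes $2$. The paper simply asserts $e(F_{y_i})-(2-2g)=2$ by the description of the singular fibers; you unpack this explicitly, which is fine.

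There is, however, a bookkeeping slip in your explicit computation. In the inclusion--exclusion for a union of smooth components meeting transversally, one subtracts the \emph{number of intersection points}, each node counted once, not twice. For $F_{y_l,i}\cup E_{y_l}\cup F_{y_l,g-i}$ there are exactly two nodes, so
\[
e(F_{y_l})=(2-2i)+2+(2-2(g-i))-2=4-2g=(2-2g)+2,
\]
whereas your written expression $(2-2i)+2+(2-2(g-i))-2\cdot 2$ actually equals $2-2g$, not $2-2g+2$ as you claim; the same error occurs in the second case. In other words, you have two cancelling mistakes: subtracting $4$ instead of $2$, and then misevaluating the result. A cleaner route is the one you already noted earlier in your proposal: a semistable curve of arithmetic genus $g$ with $\delta$ nodes has $e=2-2g+\delta$, and here each singular fiber of $F$ (not its stable model) has exactly $\delta=2$ nodes, giving the correction $+2$ immediately. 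With this fix the argument is complete and coincides with the paper's.
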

\begin{proof}
    By a standard 
    topological argument it follows that 
    $e(F)=4(g-1)(g(Y)-1)+\sum_{i=1}^{r} (e(F_{y_{i}})-(2-2g))$. By the 
    analysis of singular fibers, see Proposition \ref{daGaS}, it 
    follows that for every $i=1,\ldots, r$
    $e(F_{y_{i}})-(2-2g)=2$. 
    Then $e(F)=4(g-1)(g(Y)-1)+2r$.
    \end{proof}

We point out the reader 
the very important relation:
\begin{equation}\label{tuttoli}
{\widetilde{N}}_{1}=e+{\widetilde{N}}_{{\rm{sing}}}
\end{equation}\noindent
where $e$ is defined in Equation \ref{discrepanza}.

\begin{prop}\label{kappaquadroeffe} 
    Let $f\colon F\to Y$ be an Harris Morrison 
    semistable fibration. Then
    $$
    K^{2}_{F}=c\left[(b-1)\left[2e 
    +{\widetilde{N}}_{1}+(8g-7)\frac{{\widetilde{N}}_{3}}{3}
    \right]-3{\widetilde{N}}\right] +8{\widetilde{N}}(g(X)-1)(g-1)
    $$
\end{prop}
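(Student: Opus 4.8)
The plan is to transport the computation of $K_F^{2}$ down the tower $G\xrightarrow{u}S\xrightarrow{h}F$ and then to exploit the structure of $\pi\colon G\to A$ as a finite cover. By Lemma~\ref{laesse} and Corollary~\ref{calcolodiscrepanza},
\[
K_F^{2}=K_S^{2}+2(b-1)ce=K_G^{2}+(k-2)\widetilde{N_1}(b-1)c+2(b-1)ce ,
\]
so it is enough to find a closed expression for $K_G^{2}$.

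For $K_G^{2}$ I would use the ramification formula $K_G=\pi^{\star}K_A+R$ for the degree-$k$ cover $\pi\colon G\to A$; combined with $\pi_{\star}R=\widetilde\tau$ (equation~\ref{pistarR}) and the projection formula this gives
\[
K_G^{2}=k\,K_A^{2}+2\,K_A\cdot\widetilde\tau+R^{2},
\]
where $R^{2}$ is already provided by Proposition~\ref{erreerre}. For the remaining two terms I would use the blow-up relation $K_A=\epsilon^{\star}K_{Y\times\mP^{1}}+E_A$, together with $E_A^{2}=-(b-1)c\widetilde{N_1}$ (equation~\ref{autointe}) and $\widetilde\tau=\epsilon^{\star}\tau-2E_A$, which yield $K_A^{2}=K_{Y\times\mP^{1}}^{2}-(b-1)c\widetilde{N_1}$ and $K_A\cdot\widetilde\tau=K_{Y\times\mP^{1}}\cdot\tau+2(b-1)c\widetilde{N_1}$. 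On the product $Y\times\mP^{1}$, writing $S_Y$ and $F_Y$ for a $\pi_Y$-section and a $\pi_Y$-fibre (so $S_Y^{2}=F_Y^{2}=0$, $S_Y\cdot F_Y=1$), one has $K_{Y\times\mP^{1}}^{2}=8(1-g(Y))$, while $\tau=\nu^{\star}\sigma\equiv b\,S_Y+c\widetilde N\,F_Y$ since $\nu$ has degree $\widetilde N$; hence $K_{Y\times\mP^{1}}\cdot\tau=2b(g(Y)-1)-2c\widetilde N$ (the section/fibre bookkeeping can be cross-checked against equation~\ref{tautildequadro}).

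It then remains to eliminate $g(Y)$ and $k$. For $g(Y)$ I would use Riemann--Hurwitz for $\mu\colon Y\to X$: by the topological description recalled just before Lemma~\ref{tipodueduetre}, $\mu$ is unramified off $[J_{Y,(3)}]$ and triply ramified there, and since the $\widetilde N=\widetilde{N_1}+\widetilde{N_{2,2}}+\widetilde{N_3}$ sheets over a point of $[I_X]$ force $|[J_{Y,(3)}]|=\tfrac{\widetilde{N_3}}{3}(b-1)c$, one gets $g(Y)-1=\widetilde N(g(X)-1)+\tfrac{\widetilde{N_3}}{3}(b-1)c$. For $k$ I would use that the generic fibre is a genus-$g$ curve carrying a degree-$k$ cover of $\mP^{1}$ simply branched at $b$ points, so $b=2(g+k-1)$, equivalently $4b-8k=8(g-1)$. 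Plugging both relations into the formula for $K_F^{2}$ above and sorting the resulting terms by $g(X)$, by $c\widetilde N$, and by $(b-1)c$ times each of $\widetilde{N_1}$, $\widetilde{N_3}$, $e$ gives exactly the claimed identity; note that $\widetilde{N_{2,2}}$ disappears. I expect the only real difficulty to be this last piece of bookkeeping: one has to check that after substituting $4b-8k=8(g-1)$ the $k$-dependent coefficient of the $\widetilde{N_3}$-term collapses to $(8g-7)/3$; that the coefficient of $(b-1)c\widetilde{N_1}$, obtained by combining the $E_A^{2}$-corrections inside $K_A^{2}$ and $K_A\cdot\widetilde\tau$, the contribution of $R^{2}$, and the $(k-2)\widetilde{N_1}(b-1)c$ coming from Lemma~\ref{laesse}, collapses to $1$; and that the remaining $c\widetilde N$-contributions add up to $-3c\widetilde N$.
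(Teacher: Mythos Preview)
Your proposal is correct and follows essentially the same route as the paper: reduce $K_F^2$ to $K_G^2$ via Lemma~\ref{laesse} and Corollary~\ref{calcolodiscrepanza}, expand $K_G^2=kK_A^2+2K_A\cdot\widetilde\tau+R^2$ using the ramification formula and the projection formula, compute the $A$-terms via the blow-up relations and the product structure on $Y\times\mP^1$, and then eliminate $g(Y)$ and $k$ through Riemann--Hurwitz for $\mu$ and the relation $b=2(g+k-1)$. The only cosmetic difference is that the paper substitutes the Riemann--Hurwitz identity into the $K_A\cdot\widetilde\tau$ term immediately (so $g(X)$ appears already at that stage), whereas you keep everything in terms of $g(Y)$ and substitute at the end; the bookkeeping you outline for collapsing the $\widetilde N_1$-, $\widetilde N_3$-, and $c\widetilde N$-coefficients is exactly what the paper leaves implicit.
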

\begin{proof} We notice that by Corollary \ref{calcolodiscrepanza} 
    $K^{2}_{F}=2e(b-1)c+K^{2}_{S}$. Then by Lemma \ref{laesse} 
    $K^{2}_{F}=2e(b-1)c+(k-2)(b-1)c{\widetilde{N}}_{1}+K^{2}_{G}$. By definition 
    of $R$ we have 
    $K_{G}^{2}=R^{2}+2R\cdot\pi^{\star}K_{A}+(\pi^{\star}K_{A})^{2}$. 
    By Equation \ref{autointe} we have 
   $$
   (\pi^{\star}K_{A})^{2}=kK^{2}_{A}=-k\left[8(g(Y)-1)+(b-1)c{\widetilde{N}}_{1}\right].
   $$
   \noindent
   By projection formula and by Equation \ref{pistarR} we have:
   $$
   2R\cdot\pi^{\star}K_{A}=2{\widetilde{\tau}}K_{A}=
   \left[ 2b(g(X)-1)-2c\right]{\widetilde{N}}+b(b-1)c\frac{2{\widetilde{N}}_{3}}{3}.
   $$
   \noindent Now the claim follows by Proposition \ref{erreerre} 
   taking into account that $b=2g+2k-2$ and 
    \begin{equation}\label{generedi Y}
     2g(Y)-2={\widetilde{N}}(2g(X)-2)+\frac{2}{3}(b-1)c{\widetilde{N}}_{3}.
     \end{equation}
     since Riemann-Hurwitz formula.
   
\end{proof}

We put 
\begin{equation}\label{alpha}
    \alpha:=(b-1)\left[2e 
    +{\widetilde{N}}_{1}+(8g-7)\frac{{\widetilde{N}}_{3}}{3}
    \right]-3{\widetilde{N}}
 \end{equation}
 \noindent
    
\begin{cor}\label{notdependence} For every 
    fibration constructed as in \cite[Theorem 2.5]{HMo} we can write by 
    Proposition \ref{kappaquadroeffe} and by Equation \ref{alpha}:
    \begin{equation}\label{facile}
    K^{2}_{F}=c\alpha+8{\widetilde{N}}(g(X)-1)(g-1)
    \end{equation}
    \noindent where the coefficient $\alpha$ does not depend on 
    $g(X)$.
 \end{cor}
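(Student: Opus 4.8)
The plan is to deduce the corollary directly from Proposition \ref{kappaquadroeffe} by a single substitution. That proposition expresses
$$K^{2}_{F}=c\left[(b-1)\left[2e+{\widetilde{N}}_{1}+(8g-7)\frac{{\widetilde{N}}_{3}}{3}\right]-3{\widetilde{N}}\right]+8{\widetilde{N}}(g(X)-1)(g-1),$$
and the quantity $\alpha$ introduced in Equation \ref{alpha} is by definition exactly the coefficient of $c$ in this formula. So the first and essentially only computational step is to replace the bracket by $\alpha$, which gives Equation \ref{facile}.

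It then remains to justify the last sentence of the statement, namely that $\alpha$ is independent of $g(X)$ (indeed of the whole choice of $X$). For this I would inspect, one at a time, the ingredients appearing in \ref{alpha}. First, $b=2g+2k-2$ is determined by $g$ and $k$ alone. Next $\widetilde N=\widetilde N(k,b)$ and the refined counts $\widetilde N_{1},\widetilde N_{2,2},\widetilde N_{3}$ are the purely combinatorial quantities recalled right after diagram \ref{diag}, coming from \cite[\S 1]{HMo}: they enumerate (classes of) connected $k$-sheeted covers of $\mP^{1}$ simply branched at $b$ fixed points, with prescribed monodromy behaviour, and so depend only on $k$ and $b$, hence only on $g$ and $k$. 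Finally $e=\sum_{j}(\widetilde{M_{j,0}}+\widetilde{M_{j,g}})$ of Equation \ref{discrepanza} is assembled from the sets $[M_{j,i}]$ of \cite[Formula 1.35]{HMo}, which again live inside the symmetric-group combinatorics of the Hurwitz data and involve no reference to the base curve $X$. Thus every summand in \ref{alpha} is an invariant of the pair $(g,k)$, and $\alpha$ does not depend on $g(X)$.

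I do not expect a genuine obstacle: the real content is bookkeeping, namely keeping straight which geometric quantities vary with $X$ and which do not. The degree $c=\sum c_{i}$ of the divisor $C$ on $X$, the number $r=(b-1)c\,{\widetilde{N}}_{{\rm{sing}}}$ of singular fibres, the genus $g(Y)$, and the Euler characteristic $e(F)$ of Proposition \ref{eulerodif} all change with the choice of $X$; but all of this dependence has already been isolated, in Proposition \ref{kappaquadroeffe}, into the single explicit factor $c$ and the single term $8{\widetilde{N}}(g(X)-1)(g-1)$. The corollary is then just the observation that $K^{2}_{F}$ is affine-linear in the two ``$X$-parameters'' $c$ and $g(X)$, with slope in $c$ equal to the combinatorial constant $\alpha$.
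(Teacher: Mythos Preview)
Your proposal is correct and matches the paper's approach: the corollary is stated as an immediate consequence of Proposition \ref{kappaquadroeffe} and the definition \ref{alpha}, with no further argument given. Your explicit check that each of $b,\widetilde N,\widetilde N_{1},\widetilde N_{3},e$ is purely combinatorial in $(g,k)$ is more than the paper writes out, but it is exactly the content implicit in the phrase ``does not depend on $g(X)$''.
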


\begin{prop}\label{kappaquadroeffehaarris} If $f\colon F\to Y$ is the 
    fibration constructed in \cite[Theorem 2.5]{HMo} then
   $$
   \chi(\sO_{F})=\frac{c}{12}\left[(b-1)\left[ 
    +3{\widetilde{N}}_{1}+(12g-11)\frac{{\widetilde{N}}_{3}}{3}
    \right]-3{\widetilde{N}}\right] 
    +{\widetilde{N}}(g(X)-1)(g-1) .
    $$
\end{prop}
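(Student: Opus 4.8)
The plan is to apply Noether's formula $12\,\chi(\sO_{F})=K^{2}_{F}+e(F)$ to the smooth surface $F$, so that the assertion becomes a purely algebraic consequence of Proposition \ref{kappaquadroeffe} (which computes $K^{2}_{F}$) and Proposition \ref{eulerodif} (which computes $e(F)$). No further geometric input is required.

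First I would rewrite $e(F)=4(g-1)(g(Y)-1)+2r$ in terms of the combinatorial data $\widetilde{N},\widetilde{N}_{1},\widetilde{N}_{3},e,b,c$ occurring in the formula for $K^{2}_{F}$. For the first summand I substitute the Riemann--Hurwitz relation \ref{generedi Y}, which gives $g(Y)-1=\widetilde{N}(g(X)-1)+\tfrac{1}{3}(b-1)c\,\widetilde{N}_{3}$, hence
\[
4(g-1)(g(Y)-1)=4(g-1)\widetilde{N}(g(X)-1)+\tfrac{4}{3}(g-1)(b-1)c\,\widetilde{N}_{3}.
\]
For the second summand I recall that $r=\widetilde{N}_{{\rm{sing}}}(b-1)c$ and use Equation \ref{tuttoli}, namely $\widetilde{N}_{{\rm{sing}}}=\widetilde{N}_{1}-e$, to get $2r=2(\widetilde{N}_{1}-e)(b-1)c$.

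Then I would add the resulting expression for $e(F)$ to the formula of Proposition \ref{kappaquadroeffe}. The terms linear in $g(X)-1$ combine as $8\widetilde{N}(g-1)(g(X)-1)+4\widetilde{N}(g-1)(g(X)-1)=12\widetilde{N}(g-1)(g(X)-1)$, which after division by $12$ produces the summand $\widetilde{N}(g(X)-1)(g-1)$. All remaining terms carry a factor $c$: inside the $(b-1)$-bracket the two contributions $+2e$ (from $K^{2}_{F}$) and $-2e$ (from $2r$) cancel --- this is why the bracket in the statement opens with a bare ``$+$'' --- the coefficients of $\widetilde{N}_{1}$ add up to $3\widetilde{N}_{1}$, and those of $\widetilde{N}_{3}$ add up to $\tfrac{8g-7}{3}+\tfrac{4(g-1)}{3}=\tfrac{12g-11}{3}$, while the term $-3\widetilde{N}$ outside the bracket is unchanged. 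Dividing the total by $12$ gives exactly the claimed value of $\chi(\sO_{F})$.

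There is no genuine obstacle here: the proof is bookkeeping once Noether's formula is in place. The only points requiring a little care are the use of Equation \ref{tuttoli} to replace the count $\widetilde{N}_{{\rm{sing}}}$ of singular fibres and the cancellation of the $2e$-terms; integrality of the outcome is automatic, being equal to $\chi(\sO_{F})$.
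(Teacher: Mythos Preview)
Your proof is correct and follows exactly the paper's own argument: Noether's formula together with Proposition~\ref{eulerodif} and Proposition~\ref{kappaquadroeffe}, then the substitutions from Equations~\ref{generedi Y} and~\ref{tuttoli}. Your write-up in fact spells out the arithmetic (the cancellation of the $2e$-terms and the combination $(8g-7)/3+4(g-1)/3=(12g-11)/3$) more explicitly than the paper does.
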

 \begin{proof} By Noether Identity and by 
     Proposition \ref{eulerodif} we have 
     $$
     12\chi(\sO_{F})=K^{2}_{F}+4(g-1)(g(Y)-1) 
     +2{\widetilde{N}}_{{\rm{sing}}}(b-1)c.
     $$\noindent 
     By Proposition \ref{kappaquadroeffe} we can write
     $$
     12\chi(\sO_{F})=c\left[(b-1)\left(2e 
    +{\widetilde{N}}_{1}+(8g-7)\frac{{\widetilde{N}}_{3}}{3}
    \right)-3{\widetilde{N}}\right] +
    $$
    $$
    \qquad \qquad \qquad \qquad +8{\widetilde{N}}(g(X)-1)(g-1)+4(g-1)(g(Y)-1) 
     +2{\widetilde{N}}_{{\rm{sing}}}(b-1)c.
     $$
     \noindent By Equation \ref{tuttoli} and by Equation 
     \ref{generedi Y} we next obtain: 
$$
 12\chi(\sO_{F})=c\left[(b-1)\left( 
    3{\widetilde{N}}_{1}+(12g-11)\frac{{\widetilde{N}}_{3}}{3}
    \right)-3{\widetilde{N}}\right] 
    +12{\widetilde{N}}(g(X)-1)(g-1) 
    $$
   and the claim follows. 
 \end{proof}
 
 We put
 \begin{equation}\label{alphaprimo}
 \alpha':=\frac{1}{12}\left[(b-1)\left(
    3{\widetilde{N}}_{1}+(12g-11)\frac{{\widetilde{N}}_{3}}{3}
    \right)-3{\widetilde{N}}\right].
 \end{equation}\noindent
    In particular we have:
    
 \begin{cor}\label{notdependencebis} 
     For every 
    fibration constructed as in \cite[Theorem 2.5]{HMo} we can write 
    by Proposition \ref{kappaquadroeffehaarris} and by Equation 
    \ref{alphaprimo}:
    
\begin{equation}\label{facilefacile}
\chi(\sO_{F})=\alpha'c+{\widetilde{N}}	(g(X)-1)(g-1).
\end{equation}
\noindent where the coefficient $\alpha'$ does not depend on $g(X)$. 
\end{cor}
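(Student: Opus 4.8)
The plan is to read the identity \ref{facilefacile} straight off Proposition \ref{kappaquadroeffehaarris}, exactly as Corollary \ref{notdependence} was deduced from Proposition \ref{kappaquadroeffe}. First I would recall the formula furnished by that Proposition,
\[
\chi(\sO_{F})=\frac{c}{12}\left[(b-1)\left(3{\widetilde{N}}_{1}+(12g-11)\frac{{\widetilde{N}}_{3}}{3}\right)-3{\widetilde{N}}\right]+{\widetilde{N}}(g(X)-1)(g-1),
\]
and then compare the coefficient of $c$ with the definition of $\alpha'$ in Equation \ref{alphaprimo}: by construction that coefficient is exactly $\alpha'$, so $\chi(\sO_{F})=\alpha' c+{\widetilde{N}}(g(X)-1)(g-1)$ is immediate.

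The only point that requires a word of justification is the final claim that $\alpha'$ does not depend on $g(X)$. Here I would check that each ingredient of \ref{alphaprimo} is determined by the pair $(g,k)$ alone. Indeed $b=2g+2k-2$ by the Harris--Morrison construction; the covering degree $N(k,b)$, hence $\widetilde N=\widetilde N(k,b)$, is the purely combinatorial number of connected $k$-sheeted covers of $\mP^{1}$ simply branched over $b$ prescribed points, recalled in Section 2; and $\widetilde N_{1}$ and $\widetilde N_{3}$ are the combinatorial invariants of $(k,b)$ defined in \cite[Proposition 2.9]{HMo}. None of these quantities, nor the explicit coefficients $3$ and $12g-11$, involves the curve $X$. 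Hence \ref{alphaprimo} is a function of $g$ and $k$ only, and $\alpha'$ is independent of $g(X)$; all the $g(X)$-dependence of $\chi(\sO_{F})$ sits in the term $\widetilde N(g(X)-1)(g-1)$, together of course with the freedom in the parameter $c$, which one is forced to take large when $g(X)$ grows in order to keep $\sO_{X\times\mP^{1}}(s+\pi_X^{\star}C_i)$ very ample.

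I do not expect any real obstacle: the corollary is a bookkeeping consequence of Proposition \ref{kappaquadroeffehaarris}, entirely parallel to Corollary \ref{notdependence}. The one place where a little care is needed is in pointing out explicitly that the Hurwitz-type counts $\widetilde N$, $\widetilde N_{1}$, $\widetilde N_{3}$ were defined combinatorially in terms of $k$ and $b$ and never in terms of $X$; once this is observed, the independence of $\alpha'$ from $g(X)$ is clear.
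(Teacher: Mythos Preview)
Your proposal is correct and matches the paper's approach exactly: the paper treats this corollary as an immediate reformulation of Proposition \ref{kappaquadroeffehaarris} via the definition of $\alpha'$ in Equation \ref{alphaprimo}, and does not even supply a separate proof. Your extra justification that $b,\widetilde N,\widetilde N_{1},\widetilde N_{3}$ depend only on $(g,k)$ simply makes explicit what the paper leaves tacit.
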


 \begin{cor}\label{primadifferenza} Let $X$ be any smooth complete 
     curve with genus $g(X)$. Let $f\colon F\to Y$ be the 
    fibration constructed in \cite[Theorem 2.5]{HMo} then
    $$
     K^{2}_{F}-8\chi(\sO_{F})=c\left[(b-1)
     \left[2e-{\widetilde{N}}_{1}+\frac{{\widetilde{N}}_{3}}{9}\right]
     -{\widetilde{N}}\right].
     $$
  \end{cor}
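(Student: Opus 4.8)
The plan is to obtain the formula by a direct subtraction, using the two expressions already isolated in Corollary \ref{notdependence} and Corollary \ref{notdependencebis}. By Equation \ref{facile} one has $K^{2}_{F}=c\alpha+8\widetilde{N}(g(X)-1)(g-1)$ with $\alpha$ as in Equation \ref{alpha}, and by Equation \ref{facilefacile} one has $\chi(\sO_{F})=\alpha'c+\widetilde{N}(g(X)-1)(g-1)$ with $\alpha'$ as in Equation \ref{alphaprimo}. First I would form $K^{2}_{F}-8\chi(\sO_{F})$ and observe that the terms depending on $g(X)$ cancel identically, since $8\cdot\widetilde{N}(g(X)-1)(g-1)=8\widetilde{N}(g(X)-1)(g-1)$; hence $K^{2}_{F}-8\chi(\sO_{F})=c\,(\alpha-8\alpha')$. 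This is really the content of the statement: the positive-index defect does not see the genus of the auxiliary curve $X$ used in the Harris--Morrison construction.

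It then remains to compute $\alpha-8\alpha'$. Writing $\frac{8}{12}=\frac{2}{3}$ and expanding the definitions, the contributions of $\widetilde{N}$ combine to $-3\widetilde{N}+2\widetilde{N}=-\widetilde{N}$, those of $\widetilde{N}_{1}$ to $(b-1)(1-2)\widetilde{N}_{1}=-(b-1)\widetilde{N}_{1}$, the term in $e$ survives untouched as $2(b-1)e$ (there is no $e$ in $\alpha'$), and the term in $\widetilde{N}_{3}$ becomes $(b-1)\bigl(\frac{8g-7}{3}-\frac{2(12g-11)}{9}\bigr)\widetilde{N}_{3}$. Reducing the last coefficient to the common denominator $9$ gives $\frac{3(8g-7)-2(12g-11)}{9}=\frac{24g-21-24g+22}{9}=\frac{1}{9}$, so the dependence on $g$ disappears and the coefficient is exactly $\frac19$. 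Collecting terms, $\alpha-8\alpha'=(b-1)\bigl[2e-\widetilde{N}_{1}+\frac{\widetilde{N}_{3}}{9}\bigr]-\widetilde{N}$, and multiplying through by $c$ yields the asserted identity.

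The argument is essentially a one-line substitution followed by a single rational-number computation, so there is no serious obstacle; the only place to be careful is the coefficient of $\widetilde{N}_{3}$, where one must remember that $(8g-7)/3$ and $(12g-11)/3$ already carry a denominator $3$ before $\alpha'$ is scaled by the extra factor $8/12=2/3$, and then check that the $24g$ terms cancel so that the surviving constant is precisely $1/9$.
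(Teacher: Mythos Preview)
Your argument is correct and is essentially the same as the paper's: the paper's proof simply says the identity follows from Proposition \ref{kappaquadroeffe} and Proposition \ref{kappaquadroeffehaarris}, i.e.\ a direct subtraction of the two displayed formulas, which is exactly what you carry out via the repackaged forms $\alpha$ and $\alpha'$ of Corollaries \ref{notdependence} and \ref{notdependencebis}. Your explicit check that the coefficient of $\widetilde{N}_{3}$ collapses to $\tfrac{1}{9}$ is the only nontrivial step, and it is done correctly.
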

  \begin{proof}  It follows by Corollary \ref{kappaquadroeffehaarris} 
      and by Proposition \ref{kappaquadroeffe}.
      
   \end{proof}
   
\subsection{The proof of the Theorem} Now we want to analyse the 
relation between the fiber degrees $c_{i}$ for the divisors 
$\sigma_{i}\in |s+\pi_X ^\star C_i|$ and the genus $g(X)$ in 
order that a family $f\colon F\to Y$ as 
      in \cite[Theorem 2.5]{HMo} exists if we start from a curve $X$. 
      Since we are working over a product surface $X\times\mP^{1}$ a 
      divisor $L$ which is numerically equivalent to $s+c_{i}f$ is very ample 
      iff $L_{|(s=0)}$ is very ample; hence a necessary condition is 
      that $c_{i}$ is the degree of a very ample divisor 
      on $X$. On the other hand if $\sO_{X}(l)$ is a very ample sheaf 
      on $X$ then $s+\pi_{X}^{\star}(l)$ is a very ample divisor on 
      $X\times\mP^{1}$. 
      For a general $X$  if $\sO_{X}(l)$ is a nonspecial  very ample 
      divisor then by 
      Halphen theorem \cite[Proposition 6.1]{Ha} it follows that 
      $c_{i}\geq g(X)+3$. Then if we want to construct families 
      $f\colon F\to Y$ as
      in \cite[Theorem 2.5]{HMo} starting from a curve $X$ where $c$ 
      is small with respect to $g(X)$, definitely we need to 
      consider curves $X$ with very ample special divisors.

   \begin{thm}\label{secondaadifferenza} Let $g,k\in\mathbb N$ with 
   $3\le k \le \lfloor {(g+3)\over 2} \rfloor$. For every real number 
   $\epsilon >0$, there exists  
       a real number $\Delta(\epsilon)\geq 0$ 
       such that there are families $f\colon F\to Y$ obtained 
       by the Harris Morrison construction 
       starting from any plane curve $X$ of 
       genus $g(X)\geq \Delta(\epsilon)$ such that the following holds:
    $$
    8-\epsilon\leq \frac{K^{2}_{F}}{\chi(\sO_{F})}\leq 
       8+\epsilon.
       $$
     \end{thm}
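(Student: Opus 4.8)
The plan is to use the explicit formulas already obtained: by Corollary \ref{notdependence} we have $K^{2}_{F}=c\alpha+8\widetilde N(g(X)-1)(g-1)$, and by Corollary \ref{notdependencebis} we have $\chi(\sO_{F})=\alpha'c+\widetilde N(g(X)-1)(g-1)$, where $\alpha,\alpha'$ depend only on $g,k$ (and in particular not on $g(X)$), and where also $b=2g+2k-2$, $\widetilde N=\widetilde N(k,b)$ are fixed once $g,k$ are fixed. So
\[
\frac{K^{2}_{F}}{\chi(\sO_{F})}=\frac{c\alpha+8\widetilde N(g(X)-1)(g-1)}{c\alpha'+\widetilde N(g(X)-1)(g-1)}.
\]
The key observation is that $c=\sum_{i=1}^{b}c_{i}$ is \emph{not} rigidly tied to $g(X)$: the only constraints are that each $\sigma_{i}\in|s+\pi_X^{\star}C_i|$ be very ample on $X\times\mP^{1}$ and that the $\sigma_i$ meet transversally with at most one node per fiber. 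Since we work over a product $X\times\mP^{1}$, $s+\pi_X^{\star}C_i$ is very ample as soon as $C_i$ is a very ample divisor on $X$, and — crucially — for a smooth \emph{plane} curve $X$ of degree $d$ one has plenty of very ample \emph{special} divisors (e.g.\ hyperplane sections $\sO_X(1)$ and their small multiples) of degree $d$, which is of order $\sqrt{g(X)}$ rather than of order $g(X)$. Thus one can choose the $C_i$ so that $c$ is as small as $O(\sqrt{g(X)})$ while $g(X)\to\infty$, and with such a choice the term $\widetilde N(g(X)-1)(g-1)$ dominates both numerator and denominator, forcing the ratio to tend to $8$.

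**Steps.** First I would fix $g$ and $k$ with $3\le k\le\lfloor(g+3)/2\rfloor$, set $b=2g+2k-2$, and record that $\widetilde N$, $\alpha$, $\alpha'$ are then fixed constants (with $\widetilde N>0$ and $\alpha'>0$, the latter because $\chi(\sO_F)>0$ for these surfaces of general type, or directly from \eqref{alphaprimo}). Second, for a plane curve $X$ of degree $d$ I would take $C_i:=\sO_X(m_i)$ for suitable small positive integers $m_i$ (or more simply $C_i$ linearly equivalent to $\sO_X(1)$ together with a few extra base points to guarantee the transversality and ``one node per fiber'' genericity conditions), so that $c=\sum c_i = O(d)=O(\sqrt{g(X)})$; one must check that such divisors are very ample on $X$ (true for $\sO_X(1)$ when $d\ge 3$, hence for the $m_i$-th multiples) and that generic members $\sigma_i$ of the corresponding linear systems on $X\times\mP^{1}$ can be chosen to intersect transversally with at most one intersection per $\pi_X$-fiber — a dimension count in the very ample linear system $|s+\pi_X^{\star}C_i|$. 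Third, having produced such an $X$ and such $\sigma_i$, the Harris--Morrison construction of Section 2 applies verbatim and yields $f\colon F\to Y$ with the formulas above. Fourth, I would estimate
\[
\left|\frac{K^{2}_{F}}{\chi(\sO_{F})}-8\right|
=\left|\frac{c\alpha-8c\alpha'}{c\alpha'+\widetilde N(g(X)-1)(g-1)}\right|
\le\frac{c\,|\alpha-8\alpha'|}{\widetilde N(g(X)-1)(g-1)}
=\frac{O(\sqrt{g(X)})}{O(g(X))}\xrightarrow[g(X)\to\infty]{}0,
\]
and then choose $\Delta(\epsilon)$ so that for $g(X)\ge\Delta(\epsilon)$ the right-hand side is $\le\epsilon$. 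That gives $8-\epsilon\le K^{2}_{F}/\chi(\sO_{F})\le 8+\epsilon$, as required.

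**Main obstacle.** The only delicate point is Step two: one must be sure that for plane curves $X$ of arbitrarily large genus there really do exist very ample divisors of degree only $O(\sqrt{g(X)})$ whose associated sections $\sigma_i\subset X\times\mP^{1}$ can be put in the required transverse/general position. This is where the hypothesis that $X$ is a \emph{plane} curve is used: the hyperplane class $\sO_X(1)$ has degree $d\sim\sqrt{2g(X)}$ and is very ample, so it furnishes the needed low-degree special very ample divisors, unlike the generic (Halphen) case where any very ample divisor has degree $\ge g(X)+3$ — this is exactly the remark made just before the statement. Once the existence of such $X$ and $\sigma_i$ is granted, the rest is the bookkeeping above, and the $O(\sqrt{g(X)})/O(g(X))$ comparison makes the limit automatic. (I would also remark that the same argument, keeping $c$ bounded instead of merely $o(g(X))$, is what underlies the refined asymptotics and the positive-index statement of Proposition \ref{secondo}.)
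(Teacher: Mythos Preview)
Your proposal is correct and follows essentially the same route as the paper: use the formulas $K_F^2=c\alpha+8\widetilde N(g(X)-1)(g-1)$ and $\chi(\sO_F)=c\alpha'+\widetilde N(g(X)-1)(g-1)$, take each $C_i$ to be (a multiple of) the hyperplane class on a smooth plane curve $X$ so that $c=bd(X)=O(\sqrt{g(X)})$, and observe that the ratio then tends to $8$ as $g(X)\to\infty$. The paper's proof is terser---it simply sets $c_i=d(X)$ and writes down the explicit inequality $\frac{2g(X)-2}{3+\sqrt{8g(X)+1}}\ge\frac{b\,||\alpha-8\alpha'|-\epsilon\alpha'|}{\epsilon(g-1)}$---but the substance is identical, and your discussion of why plane curves (as opposed to general curves, where Halphen forces $c_i\ge g(X)+3$) are needed is exactly the point made in the paragraph preceding the theorem.
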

     \begin{proof} Since $X$ is a plane curve of genus $g(X)$ then by 
	 Clebsh formula its degree is 
	 $d(X)=\frac{3+\sqrt{8g(X)+1}}{2}$. We can consider 
	 $c_{i}=d(X)$ where $i=1,\ldots, b$ where $b=2(g+k-1)$. Then 
	 $c$ can be taken equal to $bd(X)$. For every Harris-Morrison family we can write:
      
\begin{equation}\label{facilefacilefacile}
	  \frac{K^{2}_{F}}{\chi(\sO_{F})}=
	  \frac{c\alpha+8{\widetilde{N}}(g(X)-1)(g-1)}{c\alpha'+{\widetilde{N}}
	  (g(X)-1)(g-1)}.
\end{equation}
Since the parameters $\alpha$, 
$\alpha'$ {\it{do not depend}} on $g(X)$ since $g,k$ are fixed and 
since $d(X)=\frac{3+\sqrt{8g(X)+1}}{2}$ then we can find 
$\frac{2g(X)-2}{3+\sqrt{8g(X)+1}}\geq  
\frac{b||\alpha-8\alpha'|-\epsilon\alpha'|}{\epsilon(g-1)}$ to obtain 
$|\frac{K^{2}_{F}}{\chi(\sO_{F})}-8|\le \epsilon$.
\end{proof}

\begin{rem}\label{pianodifferenza}
    Theorem \ref{secondaadifferenza} can be extended easily to subcanonical 
    curves $X$ where the subcanonical degree is sufficiently small with respect 
to $g(X)$.
\end{rem}

\begin{rem} We point out the reader that
    the invariants of 
    surfaces of general type 
    which supports 
    fibrations as those of Proposition 
    \ref{pianodifferenza} are strongly influenced by the base $Y$ of 
    the fibration, in a way which is quite new for the theory of 
    surfaces of general type, as far as we know.  
\end{rem}

\noindent We have shown Theorem stated in the Introduction.

 \section{Maximal gonality and surfaces of positive index.}

In \cite{HMo} the genus $g(X)$ plays no role, 
see: \cite[Corollary 3.15]{HMo}. In this work it plays 
an essential role due to the Equations \ref{facile} and Equation \ref{facilefacile}.

We consider the expressions of $K^{2}_{F}$ and $\chi(\sO_{F})$ as 
(linear) polynomials in the variable $g(X)$. In this section we consider 
the Harris Morrison families obtained in \cite{HM} and with 
{\it{maximal}} gonality.

In particular in this section we have:

\begin{equation}\label{casodisparo}
    g=2n+1, k=n+2, b=6n+4, m=(6n+3)c
 \end{equation}
 \noindent or
 
 \begin{equation}\label{casoparo}
    g=2n, k=n+1, b=6n, m=(6n-1)c
 \end{equation}
 \bigskip
 
\noindent
Let us see how $k$ influences $K^{2}_{F}$ and $8\chi(\sO_{F})$. 
We will use the following 
\begin{lem}\label{tecnico} Let $f\colon F\to Y$ be an 
    Harris Morrison genus-$g$ fibration as in \cite[Theorem 2.5]{HMo}. Assume 
    that the gonality $k$ is maximal, that is $k=\frac{g+3}{2}$ if $g$ 
    is odd and $k=\frac{g+2}{2}$ if $g\geq 4$ is even. 
    Assume that the conjectured estimate in \cite[p. 351-352]{HMo} is 
    true. Then if $g>>0$, we have $\alpha> 8\alpha'$.
 \end{lem}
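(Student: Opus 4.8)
The plan is to compare the two explicit polynomials
\[
\alpha=(b-1)\Bigl[2e+{\widetilde{N}}_{1}+(8g-7)\tfrac{{\widetilde{N}}_{3}}{3}\Bigr]-3{\widetilde{N}},
\qquad
\alpha'=\tfrac{1}{12}\Bigl[(b-1)\Bigl(3{\widetilde{N}}_{1}+(12g-11)\tfrac{{\widetilde{N}}_{3}}{3}\Bigr)-3{\widetilde{N}}\Bigr]
\]
and show that the difference $\alpha-8\alpha'$ is positive for $g\gg 0$. First I would reduce to Corollary \ref{primadifferenza}: up to the harmless positive factor $c$ one has $c^{-1}(K_{F}^{2}-8\chi(\sO_{F}))-8{\widetilde N}(g(X)-1)(g-1)=\alpha-8\alpha'=(b-1)\bigl[2e-{\widetilde{N}}_{1}+\tfrac{{\widetilde{N}}_{3}}{9}\bigr]-{\widetilde{N}}$, so the whole question is the sign of
\[
(b-1)\Bigl[2e-{\widetilde{N}}_{1}+\tfrac{{\widetilde{N}}_{3}}{9}\Bigr]-{\widetilde{N}}.
\]
Since $b=2g+2k-2$ grows linearly in $g$, the factor $(b-1)$ is a large positive multiplier, so it suffices to prove that the bracket $2e-{\widetilde{N}}_{1}+\tfrac{1}{9}{\widetilde{N}}_{3}$ is bounded below by a positive quantity that dominates ${\widetilde N}/(b-1)$, or at least is eventually positive and of the right order.

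Next I would invoke Lemma \ref{combinatorics}, which gives $e\le {\widetilde{N}}_{1}$, together with the relation \eqref{tuttoli}, ${\widetilde N}_1=e+{\widetilde N}_{\mathrm{sing}}$, to rewrite $2e-{\widetilde N}_1=e-{\widetilde N}_{\mathrm{sing}}$; hence the bracket equals $e-{\widetilde N}_{\mathrm{sing}}+\tfrac19{\widetilde N}_3$. This is where the conjectural asymptotics of \cite[p.~351--352]{HMo} enter: those estimates pin down the leading-order behaviour (in $k$, equivalently in $g$ since $k$ is maximal) of each of ${\widetilde N}$, ${\widetilde N}_1$, ${\widetilde N}_3$, ${\widetilde N}_{\mathrm{sing}}$ and of the $\widetilde M_{j,i}$ that make up $e$. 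Plugging the conjectured growth rates into $e-{\widetilde N}_{\mathrm{sing}}+\tfrac19{\widetilde N}_3$ and comparing with the conjectured rate for ${\widetilde N}/(b-1)$, one reads off that the dominant term of $\tfrac19{\widetilde N}_3$ (the count of triple ramification covers, which grows fastest) wins, giving $\alpha-8\alpha'>0$ for all sufficiently large $g$. Concretely I expect to show that $(b-1){\widetilde N}_3$ dominates $9{\widetilde N}$, i.e. $\tfrac{{\widetilde N}_3}{{\widetilde N}}\gg \tfrac{9}{b-1}$, which under the Harris--Morrison estimates is immediate because ${\widetilde N}_3/{\widetilde N}$ tends to a positive constant while $9/(b-1)\to 0$.

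The main obstacle is purely bookkeeping with the combinatorial counting functions of \cite{HMo}: one must extract from \cite[Propositions 2.9, 1.35--1.36]{HMo} and from the conjectured estimates the precise leading asymptotics of ${\widetilde N}$, ${\widetilde N}_1$, ${\widetilde N}_3$, ${\widetilde N}_{\mathrm{sing}}$ and of $e=\sum_j(\widetilde M_{j,0}+\widetilde M_{j,g})$, and verify that the negative contribution $-{\widetilde N}_{\mathrm{sing}}$ (and the $-{\widetilde N}$ term, which is lower order after division by $b-1$) cannot swamp $\tfrac19{\widetilde N}_3$. I would organise this as: (i) record the needed asymptotic equivalences as a displayed list of estimates cited from \cite{HMo}; (ii) substitute into $e-{\widetilde N}_{\mathrm{sing}}+\tfrac19{\widetilde N}_3$ and show it is $\gg {\widetilde N}_3$-order, hence positive; (iii) multiply by $(b-1)\sim 2g$ and subtract ${\widetilde N}$, observing the difference stays positive since $(b-1){\widetilde N}_3$ outgrows ${\widetilde N}$; (iv) conclude $\alpha>8\alpha'$ for $g\gg 0$. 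If the estimates in \cite{HMo} are stated only as inequalities in one direction, the delicate point will be ensuring they are strong enough in the direction we need (a lower bound on ${\widetilde N}_3$ and an upper bound on ${\widetilde N}_{\mathrm{sing}}$ and ${\widetilde N}$); absent that, one falls back on the explicit $k=n+1$ or $k=n+2$ substitutions of \eqref{casodisparo}--\eqref{casoparo} to make the comparison completely effective.
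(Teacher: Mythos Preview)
Your reduction to the expression $(b-1)\bigl[2e-\widetilde N_{1}+\tfrac{1}{9}\widetilde N_{3}\bigr]-\widetilde N$ via Corollary~\ref{primadifferenza} is exactly how the paper begins, but the asymptotic heuristics you then rely on are wrong in a way that breaks the argument. You assert that $\widetilde N_{3}/\widetilde N$ tends to a positive constant and that $\widetilde N_{3}$ ``grows fastest''. Under the conjectured estimates actually used here one has $\widetilde N_{3}\simeq (k-2)\widetilde N_{1}$ and $\widetilde N\simeq \tfrac{1}{2}k(k-1)\widetilde N_{1}$, so $\widetilde N_{3}/\widetilde N\sim 2/k\to 0$ and $\widetilde N$ grows faster than $\widetilde N_{3}$. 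Consequently $(b-1)\tfrac{1}{9}\widetilde N_{3}$ and $\widetilde N$ are of the \emph{same} order (both $\sim$ const $\cdot k^{2}\widetilde N_{1}$ in the maximal-gonality regime), and there is no domination argument: one must compute the leading constants and see that $2/3>1/2$. Your plan, as written, never does this computation, and the qualitative claim it substitutes is false.

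A second problem is the detour through $e-\widetilde N_{\mathrm{sing}}$. The conjectural input from \cite{HMo} cited in the paper gives only the ratios $\widetilde N_{3}/\widetilde N_{1}$ and $\widetilde N/\widetilde N_{1}$; it does not supply separate asymptotics for $e$ or $\widetilde N_{\mathrm{sing}}$, so step~(ii) of your outline cannot be carried out as stated. The paper sidesteps this entirely by using only $e\ge 0$ to drop the $2e$ term, then substituting $\widetilde N_{3}\simeq (k-2)\widetilde N_{1}$, $\widetilde N\simeq \tfrac{1}{2}k(k-1)\widetilde N_{1}$ together with the explicit values $k=n+2$, $b-1=6n+3$ (resp.\ $k=n+1$, $b-1=6n-1$) from \eqref{casodisparo}--\eqref{casoparo}. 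This yields a concrete quadratic in $n$ times $\widetilde N_{1}$, namely $\tfrac{1}{6}(n^{2}-43n-18)\widetilde N_{1}$ in the odd case, whose positive leading coefficient settles the matter for $n\ge 44$. Your final fallback sentence gestures at this substitution, but that \emph{is} the proof, not a fallback: the comparison is genuinely tight and cannot be replaced by the order-of-magnitude reasoning you propose.
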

 \begin{proof} By Equation 
      \ref{facile}, by Equation \ref{facilefacile} and by Corollary 
      \ref{primadifferenza} we have
      $$
      \alpha-8\alpha'=(b-1)\left[2e-{\widetilde{N}}_{1}+
      \frac{{\widetilde{N}}_{3}}{9}\right]-{\widetilde{N}}.
      $$\noindent
      Since $e\geq 0$ and $b=2(g+k-1)>0$
it is 
sufficient to show that $(b-1)\left[ 
-{\widetilde{N}}_{1}+\frac{{\widetilde{N}}_{3}}{9}\right]-{\widetilde{N}}\geq 0$. 
In the case of maximal gonality up to the first factor 
by \cite[bottom of the page 351]{HM} if $k>>0$ it is conjectured that
${\widetilde{N}}_{3}\simeq (k-2){\widetilde{N}}_{1}$, 
${\widetilde{N}}\simeq (k)(k-1)\frac {{\widetilde{N}}_{1}} {2}$. Finally 
assume that $g=2n+1$. By Equation \ref{casodisparo} we have 
$b-1=6n+3$ then up to the first order $(b-1)\left[ 
-{\widetilde{N}}_{1}+\frac{{\widetilde{N}}_{3}}{9}\right]-{\widetilde{N}}\simeq 
{\widetilde{N}}_{1}\left[ 
(6n+3)\left[ -1+\frac{n}{9}\right]-(n+2)(n+1)\frac{1}{2}\right]=
{\widetilde{N}}_{1}\frac{n^{2}-43n-18}{6}>0$ if $n\geq 44$, that is 
$k\ge 46$. 
By the same argument if $g=2n$ we have that $\alpha\geq 8\alpha'$ if 
$3n^{2}-131n+20\geq 0$ that is if $n\geq 43$ and then $k\ge 44$.

\end{proof}

\begin{prop}\label{secondo} 
 Let $f\colon F\to Y$ be an 
    Harris Morrison genus-$g$ fibration 
   starting from any plane curve $X$ of 
       genus $g(X)>>0$. Assume 
    that the gonality $k$ is maximal and that the conjectured 
    estimates in \cite[p. 351-352]{HMo} are
    true. If $g$ is big enough, then $F$ is a surface of 
    positive index i.e. $K^{2}_{F}> 8\chi(\sO_{F})$. Moreover if $f\colon F\to Y$
    is general in its class then the irregularity of $F$ is
    $g(Y)$. In particular, $f\colon F\to Y$ is the Albanese morphism 
    of $F$.
%
%
\end{prop}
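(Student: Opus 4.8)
The plan is to derive the positive-index inequality directly from Lemma \ref{tecnico} and Corollary \ref{primadifferenza}, and then to obtain $q(F)=g(Y)$ from the theorem of the fixed part once one knows that the general fibre of $f$ has simple Jacobian. For the inequality $K^{2}_{F}>8\chi(\sO_{F})$ I would only assemble what is already available: Corollary \ref{primadifferenza}, combined with Equations \ref{facile} and \ref{facilefacile}, gives $K^{2}_{F}-8\chi(\sO_{F})=c(\alpha-8\alpha')$ with $c=bd(X)>0$, while Lemma \ref{tecnico}, under the hypothesis of maximal gonality and the conjectural estimates of \cite[p.\ 351--352]{HMo}, gives $\alpha>8\alpha'$ once $g$ is large. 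Hence $K^{2}_{F}-8\chi(\sO_{F})>0$; and since $F$ is of general type with $K_{F}^{2}>0$ for $g(X)\gg 0$, the Miyaoka--Yau bound $K_{F}^{2}\le 9\chi(\sO_{F})$ then exhibits $F$ as a surface of positive index.

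For the irregularity statement the point to exploit is that $f\colon F\to Y$ is a \emph{non-isotrivial} semistable genus-$g$ fibration over a curve whose genus tends to infinity: it is non-isotrivial because by Proposition \ref{daGaS} it carries $r={\widetilde N}_{\mathrm{sing}}(b-1)c>0$ singular (nodal) fibres, and $g(Y)\to\infty$ with $g(X)$ by Equation \ref{generedi Y}. For any such fibration $q(F)\ge g(Y)$, and I would invoke the theorem of the fixed part in the shape $q(F)-g(Y)=\dim B$, where $B\subseteq\mathrm{Jac}(F_{\eta})$ is the fixed part of the family of Jacobians of the fibres ($F_{\eta}$ the generic fibre); this follows from Leray ($q(F)=g(Y)+h^{0}(Y,R^{1}f_{\star}\sO_{F})$), relative duality ($R^{1}f_{\star}\sO_{F}\cong (f_{\star}\omega_{F/Y})^{\vee}$), and Fujita's decomposition of the Hodge bundle $f_{\star}\omega_{F/Y}$. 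The whole statement is thus reduced to proving $B=0$.

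To force $B=0$ I would use the maximality of $k$: a general curve of genus $g$ is then $k$-gonal, so by Proposition \ref{harris} the moduli curves $Z=\rho(Y)\subset\mg$ of these Harris--Morrison families sweep out $\mg$ as $X$ and the sections $\sigma_{i}$ vary; hence for very general such data $Z$ lies in none of the countably many proper closed subloci of $\mg$ of curves with non-simple Jacobian, so $\mathrm{Jac}(F_{\eta})$ is simple. Then the abelian subvariety $B\subseteq\mathrm{Jac}(F_{\eta})$ is either $0$ or all of $\mathrm{Jac}(F_{\eta})$; the second case would force the variation of Hodge structure $R^{1}(f|_{Y'})_{\star}\mC$ to be constant, hence the period map constant and — by Torelli — all smooth fibres isomorphic, contradicting non-isotriviality. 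So $B=0$ and $q(F)=g(Y)$. Finally $q(F)=g(Y)$ makes $f^{\star}\colon H^{0}(Y,\omega_{Y})\to H^{0}(F,\Omega^{1}_{F})$ an isomorphism, so every holomorphic $1$-form on $F$ is pulled back from $Y$, whence the Albanese map of $F$ factors through $f$; as $f_{\star}\colon H_{1}(F,\mZ)\to H_{1}(Y,\mZ)$ is onto (the fibres being connected), $\Alb(F)\cong\mathrm{Jac}(Y)$ and $f\colon F\to Y$ is the Albanese morphism of $F$.

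The step I expect to be delicate is the genericity used above: one must check that, as $X$ and the $\sigma_{i}$ move, $Z\subset\mg$ is not confined to a fixed proper subvariety — which is exactly the sweeping property of Proposition \ref{harris} — and interpret ``general in its class'' as ``very general'', i.e.\ outside a countable union of proper closed conditions on the parameters, so that $\mathrm{Jac}(F_{\eta})$ is genuinely simple. An alternative avoiding Torelli and simplicity would be to show directly that the monodromy of $f$ — which factors through $\pi_{1}$ of the Hurwitz scheme $\sH_{k,b}$ via the maps of diagram \ref{diag} — has no nonzero invariant vector on $H^{1}$ of the fibre, using the classical full-symplectic monodromy of the universal simply branched $k$-cover of $\mP^{1}$ together with a Lefschetz-type statement for the curve $Y$ inside $\sH_{k,b}$; that is where the real work would lie.
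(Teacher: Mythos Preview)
Your argument is correct and follows essentially the same route as the paper's: for positive index you reduce $K_{F}^{2}-8\chi(\sO_{F})$ to $c(\alpha-8\alpha')$ and invoke Lemma~\ref{tecnico}, exactly as the paper does; for the irregularity you use the sweeping property (Proposition~\ref{harris}) together with the fact that curves with non-simple Jacobian lie in a proper closed sublocus of $\mg$, which is precisely the paper's contradiction argument. Your version is simply a more careful unpacking---making explicit the fixed-part identity $q(F)-g(Y)=\dim B$, disposing separately of the degenerate case $B=\mathrm{Jac}(F_{\eta})$ via non-isotriviality, and spelling out why $q(F)=g(Y)$ forces $f$ to be the Albanese map---where the paper is content with a one-line appeal to the universal property of Albanese.
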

\begin{proof} For every Harris Morrison family as in the 
 statment:
      
\begin{equation}\label{nonfacilefacilefacile}
	  \frac{K^{2}_{F}}{\chi(\sO_{F})}=
	  \frac{c\alpha+8{\widetilde{N}}(g-1)(g(X)-1)}{c\alpha'+{\widetilde{N}}
	  (g-1)(g(X)-1)}
\end{equation}
\noindent since Equation 
      \ref{facile} and Equation \ref{facilefacile}. Then the first claim is 
      equivalent to show that $\alpha\geq 8\alpha'$ and under our 
      assumption this 
      follows by Lemma \ref{tecnico}. Let $q(F)$ be the irregularity 
      of $Y$. By contradiction assume that $q(Y)>g(Y)$. Then by 
      universal property of the Albanese morphism it follows that the 
      Jacobian of the general fiber of $f\colon F\to Y$ contains an Abelian 
      subvariety; but the locus of such curves in $\overline\sM_{g}$ 
      is a proper 
      closed. Hence the family is not a sweeping one: a contradiction 
      to Proposition \ref{harris}.
\end{proof}

\noindent We have shown the Proposition stated in the Introduction.

We conclude by observing that if the conjectured estimates of Harris Morrison 
hold, then the families $F$ of Proposition \ref{secondo} 
furnish an intriguing example of
surfaces 
with ratio $\frac{K^{2}_{F}}{\chi(\sO_{F})}$ 
asymptotically $8$, which are minimal as semistable models, but with slope 
of the supported fibration aymptotically equal to $12$. 
We think that this kind of 
divergence between the two fundamental ratios among
the invariants of a fibered surface which is minimal as a semistable model
is worthy to be studied in the light of the recent results of 
Urz{\'u}a quoted in the Introduction of this paper.


\begin{thebibliography}{Muk04}


\bibitem[CFM]{CFM} D. Chen, G. Farkas, I. Morrison, \emph{
Effective divisors on moduli spaces of curves}, preprint (2012),
arXiv:1205.6138v1.
\bibitem[HMo]{HMo} J. Harris and I. Morrison,
\emph{Slopes of effective divisors on the moduli space of stable curves},
Invent. Math. 99 (1990), 321-355.
\bibitem [HM] {HM} J. Harris and D. Mumford, 
\emph{On the Kodaira dimension of the moduli space of curves},
Invent. Math. 67 (1982), 23-86.
\bibitem[Ha]{Ha}
R.~Hartshorne, \emph{Algebraic geometry}, Grad. Texts Math., Vol. 52,
Springer Verlag, New York-Heidelberg-Berlin, (1977).

\bibitem[Ka1]{Ka1} M. M. Kapranov, \emph{Chow quotients of 
Greassmannians I}, Adv. Soviet Math. 16, part 2, A.M.S. (1993) 29-110.
\bibitem[Ka2]{Ka2} M. M. Kapranov, \emph{Veronese curves and 
Grothendieck-Knudsen moduli space ${\overline{M_{0,n}}}$},
J. Algebraic Geom. {\bf{2}} (1993), no. 2, 335-365.



\bibitem[MT]{MT} B. Moishezon and M. Teicher,
\emph{Simply-connected algebraic surfaces of positive index},
Invent. Math. 89 (1987), 
no. 3, 601-643. 
\bibitem[My]{My}
Y. Miyaoka, \emph{
Algebraic surfaces with positive indices. 
Classification of algebraic and analytic manifolds (Katata, 1982)}, 281-301, 
Progr. Math., 39, BirkhŠuser Boston, Boston, MA, 1983. 
\bibitem[M]{M}
D. Mumford,\emph{ Stability of Projective Varieties}, 
L�Ens. Math. 23 (1977) 39-110.
\bibitem[P]{P} R. Pandharipande \emph{
Descendent bounds for effective divisors on $\mg$}, J. Algebraic 
Geometry, vol. {\bf 21}, (2012), 299-303.
\bibitem[PPX]{PPX} U. Persson, C. Peters, G. Xiao, \emph{ Geography of 
spin surfaces}, Topology  {\bf 35} (1996), no. 4, 845-862.
\bibitem[Re]{Re} I. Reider \emph{
Geography and the number of moduli of surfaces of general type}, 
Asian J. Math. {\bf{9}} (2005), no. 3, 407-448. 
\bibitem[Sz]{Sz}  L. Szpiro, \emph{ S\'eminaire sur les pinceaux
de courbes de genre au moins deux}, Ast\'erisque 86 (1981).
\bibitem[Ur1]{Ur1} G. Urz{\'u}a, \emph{ 
Arrangements of curves and algebraic surfaces}, 
J. Algebraic Geom. {\bf{19}} (2010), no. 2, 335-365.
\bibitem[Ur2]{Ur2} G. Urz{\'u}a, \emph{ 
Arrangements of rational sections over curves and 
the varieties they define}. 
Atti Accad. Naz. Lincei Cl. Sci. Fis. Mat. Natur. Rend. 
Lincei (9) Mat. Appl. {\bf{22}} (2011), no. 4, 453-486.
\bibitem[X1]{X1} G. Xiao, \emph{Irregularity of surfaces with a linear
pencil}, Duke Math. J. vol. {\bf 55}, no. 3 (1987), 597-602.
 
 
\end{thebibliography}
\end{document}